\begin{document}


\title{Optimal preconditioning for image deblurring with
Anti-Reflective boundary conditions}

\author{Pietro Dell'Acqua \and  Stefano Serra-Capizzano  \and Cristina Tablino-Possio}


\institute{Pietro Dell'Acqua \at
           Dipartimento di Scienza ed Alta Tecnologia, Universit\`{a} degli Studi dell'Insubria, \\ Via
           Valleggio 11, 22100 Como, Italy.
           \email{pietro.dellacqua@gmail.com}           
\and
            Stefano Serra-Capizzano \at
            Dipartimento di Scienza ed Alta Tecnologia, Universit\`{a} degli Studi dell'Insubria, \\ Via Valleggio 11, 22100 Como, Italy.
           \email{stefano.serrac@uninsubria.it}           
\and
            Cristina Tablino-Possio \at
            Dipartimento di Matematica e Applicazioni, Universit\`a di Milano Bicocca, \\ Via Cozzi 53, 20125 Milano,
            Italy.
            \email{cristina.tablinopossio@unimib.it}
}

\date{
}

\maketitle

\begin{abstract}
Inspired by the theoretical results on optimal preconditioning
stated by Ng, R.Chan, and Tang in the framework of Reflective
boundary conditions (BCs), in this paper we present analogous
results for Anti-Reflective BCs, where an additional technical
difficulty is represented by the non orthogonal character of the
Anti-Reflective transform and indeed the technique of Ng, R.Chan,
and Tang can not be used. Nevertheless, in both cases, the optimal
preconditioner is the blurring matrix associated to the
symmetrized Point Spread Function (PSF). The geometrical idea on
which our proof is based is very simple and general, so it may be
useful in the future to prove theoretical results for new proposed
boundary conditions. Computational results show that the
preconditioning strategy is effective and it is able to give rise
to a meaningful acceleration both for slightly and highly
non-symmetric PSFs.
\keywords{Image deblurring problem \and preconditioning}
\subclass{MSC 65F10}
\end{abstract}


\section{Introduction}
\label{sez:Introduction} Image deblurring problems
\cite{BerBoc,EHN,Hansen,Deblurring} represents an important and
deeply studied example belonging to the wide area of inverse
problems. In its simplest form, the deblurring problem consists in
finding the true image of an unknown object, having only the
detected image, which is affected by blur and noise. In this paper
we deal with the classical image restoration problem of blurred
and noisy images in the case of a space invariant blurring: under
such assumption, the image formation process is modelled according
to the following integral equation with space invariant kernel
\begin{equation}  \label{eq:modello_continuo}
g(x)=\int h(x-\tilde x) f(\tilde x) d\tilde x + \eta(x),\ x \in \mathbb{R}^2,
\end{equation}
where $f$ denotes the true physical object to be restored, $g$ is
the recorded blurred and noisy image, $\eta$ takes into account
unknown errors in the collected data, e.g. measurement errors and
noise. We consider a standard $2D$ generalization of the rectangle
quadrature formula on an equispaced grid, ordered row-wise from
the top-left corner to the bottom-right one, for the
discretization of (\ref{eq:modello_continuo}). Thus, we obtain the
relations
\begin{equation}  \label{eq:modello_discreto_base}
g_{i}=\sum_{j \in \mathbb{Z}^2}h_{i-j}f_{j}+\eta_i,\quad i \in \mathbb{Z}^2,
\end{equation}
in which an infinite and a shift-invariant matrix $\widetilde
A_\infty=[h_{i-j}]_{(i,j)=((i_1,i_2),(j_1,j_2))}$, i.e., a two-level
Toeplitz matrix, is involved.

Though (\ref{eq:modello_discreto_base}) relies in an infinite
summation since the true image scene does not have a finite
boundary, the data $g_i$ are clearly collected only at a finite
number of values, so representing only a finite region of such an
infinite scene. The blurring operator typically shows also a
finite support, so that it is completely described by a Point
Spread Function (PSF) mask such as
\begin{equation}  \label{eq:coefficienti_psf}
h_{PSF} =\left[h_{i_1,i_2}\right]_{i_1=-q_1,\ldots,q_1,i_2=-q_2,\ldots,q_2}
\end{equation}
where $h_{i_1,i_2}\ge 0$ for any $i_1,i_2$ and $\sum_{i=-q}^{q}h_{i} =1$, $%
i=(i_1,i_2)$, $q=(q_1,q_2)$ and the normalization is according to
a suitable conservation law. Therefore, relations
(\ref{eq:modello_discreto_base}) imply
\begin{equation}  \label{eq:modello_discreto}
g_{i}= \sum_{s=-q}^{q}h_{s}f_{i-s} +\eta_i, \quad i_1=1, \ldots, n_1, i_2=1,
\ldots, n_2,
\end{equation}
where the range of collected data identifies the so called Field of View (FOV).%

As in \eqref{eq:modello_discreto_base}, we are assuming that all the involved data in (\ref%
{eq:modello_discreto_sistema}) are reshaped in a row-wise
ordering, so that the arising linear system is
\begin{equation}  \label{eq:modello_discreto_sistema}
\widetilde A \tilde f =g -\eta
\end{equation}
where $\widetilde A\in \mathbb{R}^{N(n)\times N(n+2q)}$ is a finite
principal sub-matrix of $\widetilde A_\infty$, with main diagonal containing
$h_{0,0}$, $\tilde f\in \mathbb{R}^{N(n+2q)}$, $g,\eta \in \mathbb{R}^{N(n)}$
and with $N(m)=m_1m_2$, for any two-index $m=(m_1,m_2)$.

According to (\ref{eq:modello_discreto}), the problem is
undetermined since the number of unknowns involved in the
convolution exceeds the number of recorded data. Thus, Boundary
conditions (BCs) are introduced to artificially describe the scene
outside the FOV: the values of unknowns outside the FOV are fixed
or are defined as linear combinations of the unknowns inside the
FOV. In such a way (\ref{eq:modello_discreto_sistema}) is reduced
to a square linear system
\begin{equation}  \label{eq:sistema_quadrato}
A_n f = g -\eta
\end{equation}
with $A_n \in \mathbb{R}^{N(n)\times N(n)}$, $n=(n_1,n_2)$,
$N(n)=n_1 n_2$ and $f,g,\eta \in \mathbb{R}^{N(n)}$. \newline
Though the choice of the BCs does not affect the global spectral
behavior of the matrix, it may have a valuable impact both with
respect to the accuracy of the restored image (especially close to
the boundaries where ringing effects can appear) and to the
computational costs for recovering $f$ from the blurred datum,
with or without noise. Notice also that, typically, the matrix $A$
is very ill-conditioned and there is a significant intersection
between the subspace related to small eigen/singular values and
the high frequency subspace.
\par
The paper is organized as follows. In Section \ref{sez:BC} we
underline the importance of boundary conditions and we summarize
the structural and spectral  properties of matrices arising in the
case of Reflective and Anti-Reflective BCs. Section
\ref{sez:Optimal} is devoted to the presentation of theoretical
results relative to the explicit construction of the optimal
preconditioner for the restoration problem with Anti-Reflective
BCs. In Section \ref{sez:Computational} we report computational
results with respect to two deblurring problems, the former having
a slightly non-symmetric PSF and the latter having an highly
non-symmetric PSF, using the proposed preconditioning (combined
with Tikhonov filtering) for Landweber method. Finally, some
conclusions and perspectives are drown in  Section
\ref{sez:Conclusions}.

\section{The role of boundary conditions in the restoration problem}
\label{sez:BC}

Hereafter we summarize the relevant properties of two recently
proposed type of BCs, i.e., the Reflective \cite{NCT-SISC-1999}
and Anti-Reflective BCs \cite{S-SISC-2003}, with respect both to
structural and spectral properties of the resulting matrices.
Indeed, the use of classical periodic BCs enforces a circulant
structure and the spectral decomposition can be computed
efficiently with the fast Fourier transform (FFT) \cite{D-1979},
but these computational facilities are coupled with significant
ringing effects \cite{BerBoc}, whenever a significant
discontinuity is introduced into the image. Thus, the target is to
obtain the best possible approximation properties, keeping
unaltered the fact that the arising matrix shows an exploitable
structure. Reflective and Anti-Reflective BCs more carefully
describe the scene outside the FOV and give rise to exploitable
structures. Clearly, several other methods deal with this topic in
the  literature, e.g. local mean value \cite{SC-APNUM-} or
extrapolation techniques (see \cite{LB-1991} and references
therein). Nevertheless, the penalty of their good approximation
properties could lie in a linear algebra problem more difficult to
cope with.
%
%
%
Hereafter, as more natural in the applications, we will use a two-index
notation: we denote by $F =\left[ f_{i_1,i_2} \right]_{i_1=1,%
\ldots,n_1,i_2=1,\ldots,n_2}$ the true image inside the FOV and by $G =\left[
g_{i_1,i_2} \right]_{i_1=1,\ldots,n_1,i_2=1,\ldots,n_2}$ the recorded image.

\subsection{Reflective boundary conditions} \label{sez:BC-R}

In \cite{NCT-SISC-1999} Ng \emph{et al.} analyze the use of
Reflective BCs, both from the model and the linear algebra point
of view. The improvement with respect to Periodic BCs amounts to
the preserved continuity of the image. In reality, the scene
outside the FOV is assumed to be a reflection of the scene inside the FOV. For example, with a boundary at $%
x_{1}=0$ and $x_{2}=0$ the reflective condition is given by $f(\pm x_{1},\pm
x_{2})=f(x_{1},x_{2})$.
More precisely, along the borders, the BCs impose
\begin{equation*}
\begin{array}{rclrcll}
f_{i_{1},1-i_{2}} & \!\!=\!\! & f_{i_{1},i_{2}}, & f_{i_{1},n_{2}+i_{2}} &
\!\!=\!\! & f_{i_{1},n_{2}+1-i_{2}}, & \!\!\!\!\text{for any }i_{1}=1,\ldots
,n_{1},\ i_{2}=1,\ldots ,q_{2} \\
f_{1-i_{1},i_{2}} & \!\!=\!\! & f_{i_{1},i_{2}}, & f_{n_{1}+i_{1},i_{2}} &
\!\!=\!\! & f_{n_{1}+1-i_{1},i_{2}}, & \!\!\!\!\text{for any }i_{1}=1,\ldots
,q_{1},\ i_{2}=1,\ldots ,n_{2},%
\end{array}%
\end{equation*}%
and, at the corners, for every $i_{1}=1,\ldots ,q_{1}$, $%
i_{2}=1,\ldots ,q_{2}$ the use of BCs leads to
\begin{equation*}
\begin{array}{rclcrcl}
f_{1-i_{1},1-i_{2}} & = & f_{i_{1},i_{2}}, & \quad  &
f_{n_{1}+i_{1},n_{2}+i_{2}} & = & f_{n_{1}+1-i_{1},n_{2}+1-i_{2}}, \\
f_{1-i_{1},n_{2}+i_{2}} & = & f_{i_{1},n_{2}+1-i_{2}}, & \quad  &
f_{n_{1}+i_{1},1-i_{2}} & = & f_{n_{1}+1-i_{1},i_{2}},%
\end{array}%
\end{equation*}%
i.e., a double reflection, first with respect to one axis and after with
respect to the other, no matter about the order.

As a consequence the rectangular matrix $\widetilde{A}$ is reduced to a
square Toeplitz-plus-Hankel block matrix with Toeplitz-plus-Hankel blocks,
i.e., $A_{n}$ shows the two-level Toeplitz-plus-Hankel structure. Moreover,
if the blurring operator satisfies the strong symmetry condition, i.e., it
is symmetric with respect to each direction, formally
\begin{equation}
h_{|i|}=h_{i}\quad \text{for any }i=-q,\ldots ,q.
\label{eq:strong_simmetry_condition}
\end{equation}%
then the matrix $A_{n}$ belongs to DCT-III matrix algebra and its
spectral decomposition can be computed very efficiently using the
fast discrete cosine transform (DCT-III) \cite{S-1999}. More in
detail, let $\mathcal{C}_{n}=\{A_{n}\in \mathbb{R}^{N(n)\times
N(n)},n=(n_{1},n_{2}),N(n)=n_{1}n_{2}\ |\ A_{n}=R_{n}\Lambda
_{n}R_{n}^{T}\}$ be the two-level DCT-III matrix algebra, i.e.,
the algebra of matrices that are simultaneously diagonalized by
the orthogonal transform
\begin{equation}
R_{n}=R_{n_{1}}\otimes R_{n_{2}},\quad R_{m}=\left[ \sqrt{\frac{2-\delta
_{t,1}}{m}}\cos \left\{ \frac{(s-1)(t-1/2)\pi }{m}\right\} \right]
_{s,t=1}^{m},  \label{Rn}
\end{equation}%
with $\delta _{s,t}$ denoting the Kronecker symbol.\\
The explicit matrix structure  is $A_{n}=\mathrm{Toeplitz}(V)+%
\mathrm{Hankel}(\sigma (V),J\sigma (V))$, with $V=[V_{0}\ V_{1}\
\ldots \ V_{q_{1}}\ 0\ldots 0]$ and where each $V_{i_{1}}$,
$i_{1}=1,\ldots ,q_{1}$ is the unilevel DCT-III matrix associated
to the $i_{1}^{th}$ row of the PSF mask, i.e.,
$V_{i_{1}}=\mathrm{Toeplitz}(v_{i_{1}})+\mathrm{Hankel}(\sigma
(v_{i_{1}}),$ $J\sigma (v_{i_{1}}))$, with
$v_{i_{1}}=[h_{i_{1},0},\ldots ,h_{i_{1},q_{2}},0,\ldots ,0]$.
Here, $\sigma $ denotes the shift operator such that $\sigma
(v_{i_{1}})=[h_{i_{1},1},\ldots ,h_{i_{1},q_{2}},0,\ldots ,0]$ and
$J$ denotes the usual flip matrix; at the block level the same
operations are intended in block-wise sense.

Not only the structural characterization, but also the spectral
description is completely known: let $f$ be the bivariate
generating function associated to the PSF mask
(\ref{eq:coefficienti_psf}), that is
\begin{eqnarray}
f(x_{1},x_{2}) &=&h_{0,0}+2\sum_{s_{1}=1}^{q_{1}}h_{s_{1},0}\cos
(s_{1}x_{1})+2\sum_{s_{2}=1}^{q_{2}}h_{0,s_{2}}\cos (s_{2}x_{2})  \notag \\
&&\ +4\sum_{s_{1}=1}^{q_{1}}\sum_{s_{2}=1}^{q_{2}}h_{s_{1},s_{2}}\cos
(s_{1}x_{1})\cos (s_{2}x_{2}),  \label{eq:funzione_generatrice}
\end{eqnarray}%
then the eigenvalues of the corresponding matrix $A_{n}\in \mathcal{C}_{n}$
are given by
\begin{equation*}
\lambda _{s}(A_{n})=f\left( x_{s_{1}}^{[n_{1}]},x_{s_{2}}^{[n_{2}]}\right)
,\ s=(s_{1},s_{2}),\quad x_{r}^{[m]}=\frac{(r-1)\pi }{m},
\end{equation*}%
where $s_{1}=1,\ldots ,n_{1}$, $s_{2}=1,\ldots ,n_{2}$, and where
the two-index notation highlights the tensorial structure of the
corresponding eigenvectors. Finally, note that standard operations
like matrix-vector products, resolution of linear systems and
eigenvalues evaluations can be performed by means of FCT-III
\cite{NCT-SISC-1999} within $O(n_{1}n_{2}\log (n_{1}n_{2}))$
arithmetic operations (ops).

\subsection{Anti-Reflective boundary conditions}

\label{sez:BC-AR} More recently, Anti-Reflective boundary
conditions
(AR-BCs) have been proposed in \cite{S-SISC-2003} and studied \cite%
{ADS-AR1,ADS-AR2,ADNS-AR3,DENPS-SPIE-2003,DS-IP-2005,Anti-Reflective3,P-NLA-2006,TP-2010}%
. The improvement relies in the fact that not only the continuity
of the image, but also of the normal derivative, are guaranteed at
the boundary. This higher regularity,  not shared with Dirichlet
or periodic BCs, and only partially shared with reflective BCs,
significantly reduces typical ringing artifacts in the restored
image.

The key idea is simply to assume that the scene outside the FOV is
the anti-reflection of the scene inside the FOV. For example, with
a boundary at $x_{1}=0$ the anti-reflective condition imposes
$f(-x_{1},x_{2})-f(x_{1}^{\ast
},x_{2})=-(f(x_{1},x_{2})-f(x_{1}^{\ast },x_{2}))$, for any
$x_{2},$ where $x_{1}^{\ast }$ is the center of the
one-dimensional anti-reflection, i.e.,
\begin{equation*}
f(-x_{1},x_{2})=2f(x_{1}^{\ast },x_{2})-f(x_{1},x_{2}),\text{for any }x_{2}.
\end{equation*}%
Notice that, in order to preserve a tensorial structure,  a double
anti-reflection, first with respect to one axis and after with
respect to the other, is considered at the
corners, 
so that the BCs impose
\begin{equation*}
f(-x_{1},-x_{2})=4f(x_{1}^{\ast },x_{2}^{\ast })-2f(x_{1}^{\ast
},x_{2})-2f(x_{1},x_{2}^{\ast })+f(x_{1},x_{2}),%
\end{equation*}%
where $(x_{1}^{\ast },x_{2}^{\ast })$ is the center of the
two-dimensional anti-reflection. More specifically, by choosing as
center of the anti-reflection the first available data, along the
borders, the BCs impose
\begin{equation*}
\begin{array}{lll}
\!\!f_{1-i_{1},i_{2}}\!\!\!=\!2f_{1,i_{2}}\!-\!f_{i_{1}+1,i_{2}},\  &
\!\!\!f_{n_{1}+i_{1},i_{2}}\!\!\!=\!2f_{n_{1},i_{2}}\!-%
\!f_{n_{1}-i_{1},i_{2}}, & \!\!\!i_{1}=1,\ldots ,q_{1},\ \!i_{2}=1,\ldots
,n_{2}, \\
\!\!f_{i_{1},1-i_{2}}\!\!\!=\!2f_{i_{1},1}\!-\!f_{i_{1},i_{2}+1},\  &
\!\!\!f_{i_{1},n_{2}+i_{2}}\!\!\!=\!2f_{i_{1},n_{2}}\!-%
\!f_{i_{1},n_{2}-i_{2}}, & \!\!\!i_{1}=1,\ldots ,n_{1},\ \!i_{2}=1,\ldots
,q_{2}.%
\end{array}%
\end{equation*}%
At the corners, for any $i_{1}=1,\ldots ,q_{1}$ and $%
i_{2}=1,\ldots ,q_{2}$, we find
\begin{equation*}
\begin{array}{rcl}
f_{1-i_{1},1-i_{2}} & = &
4f_{1,1}-2f_{1,i_{2}+1}-2f_{i_{1}+1,1}+f_{i_{1}+1,i_{2}+1}, \\
f_{1-i_{1},n_{2}+i_{2}} & = &
4f_{1,n_{2}}-2f_{1,n_{2}-i_{2}}-2f_{i_{1}+1,n_{2}}+f_{i_{1}+1,n_{2}-i_{2}},
\\
f_{n_{1}+i_{1},1-i_{2}} & = &
4f_{n_{1},1}-2f_{n_{1},i_{2}+1}-2f_{n_{1}-i_{1},1}+f_{n_{1}-i_{1},i_{2}+1},
\\
f_{n_{1}+i_{1},n_{2}+i_{2}} & = &
4f_{n_{1},n_{2}}-2f_{n_{1},n_{1}-i_{2}}-2f_{n_{1}-i_{1},n_{2}}+f_{n_{1}-i_{1},n_{2}-i_{2}}.%
\end{array}%
\end{equation*}%
As a matter of fact, the rectangular matrix $\widetilde{A}$ is
reduced to a square Toeplitz-plus-Hankel block matrix with
Toeplitz-plus-Hankel blocks, plus an additional structured low
rank matrix. More details on this structure in the general case
are reported in Section \ref{sez:Optimal}. Hereafter, we observe
that again under the assumption of strong symmetry of
the PSF and of a mild finite support condition (more precisely $h_{i}=0$ if $%
|i_{j}|\geq n-2$, 
for some $j\in \{1,2\}$), the linear system $A_{n}f=g$ is such
that $A_{n}$ belongs to the $\mathcal{AR}_{n}^{2D}$ commutative
matrix
algebra \cite{ADS-AR2}. \par
This new algebra shares some properties with the $%
\tau $ (or DST-I) algebra \cite{BC-1983}. Going inside the
definition, a matrix $A_{n}\in \mathcal{AR}_{n}^{2D}$ has
the following block structure 
\begin{equation}
A_{n}=\left[
\begin{array}{c|c|c}
\tilde{H}_{0}+Z_{1} & 0^{T} & 0 \\ \hline
\tilde{H}_{1}+Z_{2} &  & 0 \\
\vdots  &  & \vdots  \\
\tilde{H}_{q_{1}-1}+Z_{q_{1}} &  & 0 \\
\tilde{H}_{q_{1}} & \ \tau (\tilde{H}_{0},\ldots ,\tilde{H}_{q_{1}})\  &
\tilde{H}_{q_{1}} \\
0 &  & \tilde{H}_{q_{1}-1}+Z_{q_{1}} \\
\vdots  &  & \vdots  \\
0 &  & \tilde{H}_{1}+Z_{2} \\ \hline
0 & 0^{T} & \tilde{H}_{0}+Z_{1}%
\end{array}%
\right] ,  \label{eq:AR2D}
\end{equation}%
where $\tau (\tilde{H}_{0},\ldots ,\tilde{H}_{q_{1}})$ is a block $\tau $
matrix with respect to the $\mathcal{AR}^{1D}$ blocks $\tilde{H}_{i_{1}}$, $%
i_{1}=1,\ldots ,q_{1}$ and $Z_{k}=2\sum_{t=k}^{q_{1}}\tilde{H}_{t}$ for $%
k=1,\ldots ,q_{1}$. In particular, the $\mathcal{AR}^{1D}$ block $\tilde{H}%
_{i_{1}}$ is associated to $i_{1}^{th}$ row of the PSF, i.e., $%
h_{i_{1}}^{[1D]}=[h_{i_{1},i_{2}}]_{i_{2}=-q_{2},\ldots ,q_{2}}$ and it is
defined as
\begin{equation}
\tilde{H}_{i_{1}}=\left[
\begin{array}{c|c|c}
h_{i_{1},0}+z_{i_{1},1} & 0^{T} & 0 \\ \hline
h_{i_{1},1}+z_{i_{1},2} &  & 0 \\
\vdots  &  & \vdots  \\
h_{i_{1},q_{2}-1}+z_{i_{1},q_{2}} &  & 0 \\
h_{i_{1},q_{2}} & \ \tau (h_{i_{1},0},\ldots ,h_{i_{1},q_{2}})\  &
h_{i_{1},q_{2}} \\
0 &  & h_{i_{1},q_{2}-1}+z_{i_{1},q_{2}} \\
\vdots  &  & \vdots  \\
0 &  & h_{i_{1},1}+z_{i_{1},2} \\ \hline
0 & 0^{T} & h_{i_{1},0}+z_{i_{1},1}%
\end{array}%
\right] ,  \label{eq:AR2Dbis}
\end{equation}%
where $z_{i_{1},k}=2\sum_{t=k}^{q_{2}}h_{i_{1},t}$ for $k=1,\ldots ,q_{2}$
and $\tau (h_{i_{1},0},\ldots ,h_{i_{1},q_{2}})$ is the unilevel $\tau $
matrix associated to the one-dimensional PSF $h_{i_{1}}^{[1D]}$ previously defined.
Notice that the rank-1 correction given by the elements $z_{i_{1},k}$
pertains to the contribution of the anti-reflection centers with respect to
the vertical borders, while the low rank correction given by the matrices $%
Z_{k}$ pertains to the contribution of the anti-reflection centers with
respect to the horizontal borders.

Favorable computational properties are guaranteed also by virtue
of the $\tau $ structure, so that, firstly we briefly summarize
the relevant properties of the two-level $\tau$ algebra
\cite{BC-1983}.
Let $\mathcal{T}_{n}=\{A_{n}\in \mathbb{R}^{N(n)\times
N(n)},n=(n_{1},n_{2}),N(n)=n_{1}n_{2}\ |\ A_{n}=Q_{n}\Lambda _{n}Q_{n}\}$ be
the two-level $\tau $ matrix algebra, i.e., the algebra of matrices that are
simultaneously diagonalized by the symmetric orthogonal transform
\begin{equation}
Q_{n}=Q_{n_{1}}\otimes Q_{n_{2}},\quad Q_{m}=\left[ \sqrt{\frac{2}{m+1}}\sin
\left\{ \frac{st\pi }{m+1}\right\} \right] _{s,t=1}^{m}.  \label{Qn}
\end{equation}%
With the same notation as the DCT-III algebra case, the explicit structure
of the matrix is two level Toeplitz-plus-Hankel. More precisely,
$A_{n}=\mathrm{Toeplitz}(V)-\mathrm{Hankel}(\sigma ^{2}(V),$
$J\sigma ^{2}(V))$
with $V=[V_{0}\ V_{1}\ \ldots \ V_{q_{1}}\ 0\ldots 0]$, where each $V_{i_{1}}
$, $i_{1}=1,\ldots ,q_{1}$ is a the unilevel $\tau $ matrix associated to
the $i_{1}^{th}$ row of the PSF mask, i.e., $V_{i_{1}}\!=\!\mathrm{Toeplitz}%
(v_{i_{1}})-\mathrm{Hankel}(\sigma ^{2}(v_{i_{1}}),J\sigma ^{2}(v_{i_{1}}))$
with $v_{i_{1}}=[h_{i_{1},0},\ldots ,h_{i_{1},q_{2}},0,\ldots ,0]$. Here, we
denote by $\sigma ^{2}$ the double shift operator such that $\sigma
^{2}(v_{i_{1}})=[h_{i_{1},2},\ldots ,h_{i_{1},q_{2}},0,\ldots ,0]$; at the
block level the same operations are intended in block-wise sense.
The spectral characterization is also completely known since for any $%
A_{n}\in \mathcal{T}_{n}$ the related eigenvalues are given by
\begin{equation*}
\lambda _{s}(A_{n})=f\left( x_{s_{1}}^{[n_{1}]},x_{s_{2}}^{[n_{2}]}\right)
,s=(s_{1},s_{2}),\quad x_{r}^{[m]}=\frac{r\pi }{m+1},
\end{equation*}%
where $s_{1}=1,\ldots ,n_{1}$, $s_{2}=1,\ldots ,n_{2}$, and $f$ is the
bivariate generating function associated to the $PSF$ defined in (\ref%
{eq:funzione_generatrice}). 

As in the DCT-III case, standard operations like matrix-vector products,
resolution of linear systems and eigenvalues evaluations can be performed by
means of FST-I within $O(n_{1}n_{2}\log (n_{1}n_{2}))$ (ops).
Now, with respect to the $\mathcal{AR}_{n}^{2D}$ matrix algebra, a
complete spectral characterization is given in
\cite{ADS-AR2,ADNS-AR3}. Of considerable importance is the
existence of a transform $T_{n}$ that simultaneously diagonalizes
all the matrices belonging to $\mathcal{AR}_{n}^{2D}$, although
the orthogonality property is partially lost.

\begin{theorem}
{\rm \textrm{\cite{ADNS-AR3}}} \label{teo:teo_Jordan} Any matrix $A_n\in \mathcal{AR}_n^{2D}$, $%
n=(n_1,n_2)$, can be diagonalized by $T_n$, i.e.,
\begin{equation*}
A_n=T_n \Lambda_n \widetilde T_n,\quad \widetilde{T}_n= T_n^{-1}
\end{equation*}
where $T_n=T_{n_1}\otimes T_{n_2}$, $\widetilde T_n= \widetilde
T_{n_1}\otimes \widetilde T_{n_2}$, with
\begin{equation*}
T_m=\left [
\begin{array}{ccc}
\alpha_m^{-1} & 0^T & 0 \\
\  &  &  \\
\alpha_m^{-1}p & Q_{m-2} & \alpha_m^{-1}Jp \\
\  &  &  \\
0 & 0^T & \alpha_m^{-1}%
\end{array}
\right ] \quad and \quad \widetilde{T}_m=\left [
\begin{array}{ccc}
\alpha_m & 0^T & 0 \\
\  &  &  \\
-Q_{m-2}p & Q_{m-2} & -Q_{m-2}Jp \\
\  &  &  \\
0 & 0^T & \alpha_m%
\end{array}
\right ]
\end{equation*}
The entries of the vector $p\in\mathbb{R}^{m-2}$ are defined as $p_j=1-{j}/{%
(m-1)}$, $j=1,\ldots,m-2$, $J\in\mathbb{R}^{m-2\times m-2}$ is the flip
matrix, and $\alpha_m$ is a normalizing factor chosen such that the
Euclidean norm of the first and last column of $T_m$ will be equal to $1$.
\end{theorem}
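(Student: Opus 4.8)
The plan is to settle the unilevel case first and then lift it to two levels by a block/tensor argument, since \eqref{eq:AR2D} displays $A_n$ as a matrix with the bordered block-$\tau$ pattern of \eqref{eq:AR2Dbis} whose scalar entries are replaced by the mutually commuting $\mathcal{AR}^{1D}_{n_2}$ blocks $\tilde{H}_{i_1}$. In one dimension a matrix $A\in\mathcal{AR}^{1D}_m$ reads, by \eqref{eq:AR2Dbis},
\begin{equation*}
A=\left[\begin{array}{ccc}\gamma & 0^T & 0\\ v & \widehat\tau & Jv\\ 0 & 0^T & \gamma\end{array}\right],
\end{equation*}
where $\widehat\tau=\tau(h_0,\ldots,h_q)$ is the unilevel $\tau$-block of size $m-2$, $\gamma=h_0+2\sum_{t=1}^q h_t$, and $v=[\,h_1+z_2,\ldots,h_{q-1}+z_q,h_q,0,\ldots,0\,]^T$ with $z_k=2\sum_{t=k}^q h_t$. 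First I would check $T_m\widetilde T_m=I$ using only that $Q_{m-2}$ is symmetric orthogonal, hence $Q_{m-2}^2=I$; this identifies $\widetilde T_m$ with $T_m^{-1}$. The $m-2$ interior columns $[\,0;\,Q_{m-2}e_j;\,0\,]$ of $T_m$ are at once eigenvectors of $A$, with eigenvalues $\lambda_j(\widehat\tau)$, because the border rows and columns are decoupled from the interior block. For the first column $\alpha_m^{-1}[\,1;\,p;\,0\,]$ the eigenvalue equation $A[\,1;p;0\,]=\gamma[\,1;p;0\,]$ collapses to the single relation
\begin{equation*}
(\gamma I-\widehat\tau)\,p=v,\qquad p_j=1-\tfrac{j}{m-1},
\end{equation*}
and the last column $\alpha_m^{-1}[\,0;\,Jp;\,1\,]$ then follows from the same relation together with the fact that every matrix of the unilevel $\tau$ algebra commutes with the flip $J$ (the columns of $Q_{m-2}$ are eigenvectors of $J$). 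So the whole unilevel statement rests on this one identity, which I regard as the main obstacle.

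To establish $(\gamma I-\widehat\tau)p=v$ I would write the $\tau$ action through its defining extension, $(\widehat\tau w)_i=\sum_{s=-q}^q h_s\widetilde w_{i+s}$ with $\widetilde w$ set to zero at the two fictitious endpoints and then reflected oddly, and compare the odd extension $\widetilde p$ of $p$ with the genuine affine extension $\ell_j=1-j/(m-1)$ of $p$. Since $\ell$ vanishes at the right endpoint — so that the right reflection simply reproduces $\ell$ — but equals $1$ at the left one, $\widetilde p-\ell$ is supported on the indices $\le 0$, where it takes the value $-1$ at $0$ and $-2$ at every negative index; moreover any affine sequence obeys $\sum_s h_s\ell_{i+s}=\gamma\ell_i$, since $\sum_s h_s=\gamma$ and $\sum_s h_s s=0$ by the symmetry of the PSF. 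Subtracting these two facts yields $\gamma p_i-(\widehat\tau p)_i=-\sum_{s\le -i}h_s(\widetilde p-\ell)_{i+s}$, a finite sum that is $0$ for $i>q$ and equals $h_i+z_{i+1}=v_i$ for $i=1,\ldots,q$; the mild support hypothesis $h_i=0$ for $|i_j|\ge n-2$ is precisely what keeps the bordered form \eqref{eq:AR2Dbis} valid and prevents the two borders from interacting. This proves $A=T_m\Lambda_m\widetilde T_m$ in one dimension.

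To reach two levels I would diagonalise the inner variable: all the $\tilde{H}_{i_1}$, hence their partial sums $Z_k$ as well, lie in the commutative algebra $\mathcal{AR}^{1D}_{n_2}$, so $\tilde{H}_{i_1}=T_{n_2}D_{i_1}\widetilde T_{n_2}$ with $D_{i_1}$ diagonal, and conjugating $A_n$ by $I_{n_1}\otimes T_{n_2}$ produces a matrix all of whose blocks are diagonal. A permutation reordering the unknowns by the inner index then makes it block-diagonal, the $j$-th block being a scalar matrix of exactly the shape \eqref{eq:AR2Dbis} built from the numbers $(D_{i_1})_{jj}$. The unilevel result diagonalises each such block through $T_{n_1}$, that is through $T_{n_1}\otimes I_{n_2}$ once the permutation is undone; composing the two conjugations gives $T_{n_1}\otimes T_{n_2}$, and the product of the two inverses is $\widetilde T_{n_1}\otimes\widetilde T_{n_2}=T_n^{-1}$, as claimed.
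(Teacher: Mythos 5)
This theorem is quoted in the paper from \cite{ADNS-AR3} and no proof is given in the text, so there is no in-paper argument to compare against; your proposal is a self-contained proof, and I checked it step by step and found it correct. The verification $T_m\widetilde T_m=I$ indeed uses only $Q_{m-2}^2=I$; the interior columns are trivially eigenvectors because the first and last rows of an $\mathcal{AR}^{1D}$ matrix are $[\gamma,0^T,0]$ and $[0,0^T,\gamma]$; and the whole unilevel statement does reduce to the single identity $(\gamma I-\widehat\tau)p=v$, with the last column handled by centrosymmetry of $\widehat\tau$ (i.e.\ $J\widehat\tau J=\widehat\tau$, which follows from $JQ_{m-2}e_j=(-1)^{j+1}Q_{m-2}e_j$). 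Your proof of the key identity is the nicest part: writing the $\tau$ action as convolution against the odd extension $\widetilde p$, observing that the affine sequence $\ell_j=1-j/(m-1)$ vanishes at the right reflection center so that $\widetilde p\equiv\ell$ for $j\ge m-1$ while $(\widetilde p-\ell)_0=-1$ and $(\widetilde p-\ell)_{-j}=-2$, and using $\sum_s h_s=\gamma$, $\sum_s sh_s=0$ (valid because the symbol of an $\mathcal{AR}$ matrix is symmetric) gives exactly $\gamma p_i-(\widehat\tau p)_i=h_i+2\sum_{t>i}h_t=v_i$, including the degenerate cases $i=q$ and $i>q$. The two-level lift by simultaneous diagonalization of the commuting $\mathcal{AR}^{1D}_{n_2}$ blocks, a perfect-shuffle permutation, and the unilevel result applied blockwise is standard and correctly composes to $T_{n_1}\otimes T_{n_2}$. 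Two points worth making explicit if you write this up: (i) the eigenvalues $(D_{i_1})_{jj}$ of the inner blocks form a symmetric but not necessarily nonnegative or normalized symbol, so you should note that your unilevel argument only ever uses symmetry of the symbol, which it does; (ii) the support condition $q\le n-3$ is what guarantees that the odd-extension description of $\widehat\tau$ (and the bordered form of $A$) is exact, which you correctly flag.
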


\begin{theorem}
{\rm \textrm{\cite{ADS-AR2}}} \label{teo:teo_autovalori} Let $A_n\in \mathcal{AR}%
_n^{2D}$, $n=(n_1,n_2)$, the matrix related to the PSF $%
h_{PSF}=[h_{i_1,i_2}]_{i_1=-q_1,\ldots,q_1, i_2=-q_2,\ldots,q_2}$. Then, the
eigenvalues of $A_n$ are given by

\begin{itemize}
\item $1$ with algebraic multiplicity $4$,

\item the $n_2-2$ eigenvalues of the unilevel $\tau$ matrix related to the
one-dimensional PSF $h^{\{r\}}=[\sum_{i_1=-q_1}^{q_1}h_{i_1,-q_2}, \ldots,
\sum_{i_1=-q_1}^{q_1}h_{i_1,q_2}]$,
each one with algebraic multiplicity $2$,

\item the $n_1-2$ eigenvalues of the unilevel $\tau$ matrix related to the
one-dimensional PSF $h^{\{c\}}= [\sum_{i_2=-q_2}^{q_2}h_{-q_1,i_2}, \ldots,
\sum_{i_2=-q_2}^{q_2}h_{q_1,i_2}]$,
each one with algebraic multiplicity $2$,

\item the $(n_1-2)(n_2-2)$ eigenvalues of the two-level $\tau$ matrix
related to the two-dimensional PSF $h_{PSF}$.
\end{itemize}
\end{theorem}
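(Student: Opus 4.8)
The plan is to diagonalize $A_n$ explicitly, exploiting the nested block pattern \eqref{eq:AR2D}--\eqref{eq:AR2Dbis} together with the transform $T_n=T_{n_1}\otimes T_{n_2}$ of Theorem~\ref{teo:teo_Jordan}, and reducing the two-level computation to two instances of the one-level ($\mathcal{AR}^{1D}$) spectral picture. The first step would be to observe that, read block-wise along the first index, $A_n$ in \eqref{eq:AR2D} is a \emph{linear} function of its $\mathcal{AR}^{1D}$ blocks: I would write $A_n=\sum_{k=0}^{q_1}\Phi_k\otimes\tilde H_k$, where $\Phi_k\in\R^{n_1\times n_1}$ is the fixed scalar matrix that records where $\tilde H_k$ occurs in \eqref{eq:AR2D} --- on the $k$-th sub/super-block-diagonals of the inner block $\tau$ and, through the corrections $Z_k=2\sum_{t=k}^{q_1}\tilde H_t$, in the first and last block columns. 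One then checks that each $\Phi_k$ is itself the size-$n_1$ $\mathcal{AR}^{1D}$ matrix of the $k$-th coordinate PSF $e_k=[\delta_{0,k},\ldots,\delta_{q_1,k}]$, so by the one-level instance of Theorem~\ref{teo:teo_Jordan} it is diagonalized by $T_{n_1}$, say $\widetilde T_{n_1}\Phi_k T_{n_1}=\Delta_k$. Hence
\begin{equation*}
(\widetilde T_{n_1}\otimes I_{n_2})\,A_n\,(T_{n_1}\otimes I_{n_2})=\sum_{k=0}^{q_1}\Delta_k\otimes\tilde H_k=\bigoplus_{l=1}^{n_1}E_l,\qquad E_l:=\sum_{k=0}^{q_1}(\Delta_k)_{ll}\,\tilde H_k,
\end{equation*}
which is block-diagonal with $n_1$ blocks of size $n_2$. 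The delicate point, and the one I expect to cost the most work, is exactly this reduction: $A_n$ is \emph{not} literally a tensor product, since the low-rank corrections $Z_k$ entangle the two levels, so one must verify by hand that they are nonetheless absorbed into a genuine $\mathcal{AR}^{1D}_{n_1}$ pattern $\Phi_k$ and therefore pass through the $T_{n_1}$-conjugation as scalars.

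Next I would identify each $E_l$. The diagonal entry $(\Delta_k)_{11}=(\Delta_k)_{n_1n_1}$ is the eigenvalue of $\Phi_k$ on the two columns of $T_{n_1}$ that are affine in the grid index, namely the total mass $(e_k)_0+2\sum_{t=1}^{q_1}(e_k)_t$, which is $1$ for $k=0$ and $2$ for $k\ge1$; hence the two corner blocks are $E_1=E_{n_1}=\tilde H_0+2\sum_{t=1}^{q_1}\tilde H_t=\tilde H_0+Z_1$. For the inner mode $l=j_1+1$ one has, by the $\tau$-eigenvalue formula of size $n_1-2$, $(\Delta_k)_{ll}=\delta_{k,0}+2(1-\delta_{k,0})\cos(k\theta_{j_1})$ with $\theta_{j_1}=j_1\pi/(n_1-1)$, whence $E_{j_1+1}=\tilde H_0+2\sum_{t=1}^{q_1}\cos(t\theta_{j_1})\tilde H_t=\sum_{i_1=-q_1}^{q_1}\e^{\imm i_1\theta_{j_1}}\tilde H_{i_1}$, using the strong symmetry $\tilde H_{-i_1}=\tilde H_{i_1}$. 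Since the map ``one-dimensional PSF $\mapsto\mathcal{AR}^{1D}$ matrix'' is linear, $E_1=E_{n_1}$ is the $\mathcal{AR}^{1D}$ matrix of the row-summed PSF $h^{\{r\}}=[\sum_{i_1}h_{i_1,i_2}]_{i_2=-q_2}^{q_2}$, and $E_{j_1+1}$ is the $\mathcal{AR}^{1D}$ matrix of the strongly symmetric one-dimensional PSF $h^{(j_1)}=[\sum_{i_1}\e^{\imm i_1\theta_{j_1}}h_{i_1,i_2}]_{i_2}$.

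Then I would apply the one-level spectral description to each $E_l$, once more through Theorem~\ref{teo:teo_Jordan} but now with the transform $T_{n_2}$: an $\mathcal{AR}^{1D}$ matrix $\mathcal{AR}^{1D}(g)$ has the eigenvalue $\sum_j g_j$ with multiplicity $2$, carried by the two columns of $T_{n_2}$ that are affine in the grid --- these are eigenvectors precisely because the zero first moment of a symmetric $g$ makes the AR convolution reproduce affine functions up to the scalar factor $\sum_j g_j$ --- together with the $n_2-2$ eigenvalues of the unilevel $\tau$ matrix $\tau(g_0,\ldots,g_{q_2})$, that is, the values of the generating function of $g$ at $\phi_{j_2}=j_2\pi/(n_2-1)$. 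For $g=h^{\{r\}}$ the affine eigenvalue is $\sum_{i_1,i_2}h_{i_1,i_2}=1$, so the two corner blocks contribute $1$ with total multiplicity $4$ and each eigenvalue of $\tau(h^{\{r\}})$ twice; for $g=h^{(j_1)}$ the affine eigenvalue is $\sum_{i_1}\e^{\imm i_1\theta_{j_1}}h^{\{c\}}_{i_1}$, i.e. the $j_1$-th eigenvalue of the size-$(n_1-2)$ $\tau$ matrix of $h^{\{c\}}=[\sum_{i_2}h_{i_1,i_2}]_{i_1}$, so over $j_1=1,\ldots,n_1-2$ each eigenvalue of $\tau(h^{\{c\}})$ appears twice; finally the remaining $n_2-2$ eigenvalues of $E_{j_1+1}$ are $\sum_{i_1,i_2}\e^{\imm i_1\theta_{j_1}}\e^{\imm i_2\phi_{j_2}}h_{i_1,i_2}=f(\theta_{j_1},\phi_{j_2})$, the $(j_1,j_2)$-th eigenvalue of the two-level $\tau$ matrix of $h_{PSF}$, for $j_1=1,\ldots,n_1-2$ and $j_2=1,\ldots,n_2-2$. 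Collecting the four groups would give $4+2(n_2-2)+2(n_1-2)+(n_1-2)(n_2-2)=n_1n_2$ eigenvalues, hence the full spectrum, in agreement with the statement.
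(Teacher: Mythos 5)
This theorem is stated in the paper as a quoted result from the reference [ADS-AR2] and is given \emph{without proof} here, so there is no in-paper argument to compare yours against; I can only judge your proposal on its own terms, and it is correct. The decisive step is the one you flag yourself: rewriting \eqref{eq:AR2D} as $A_n=\sum_{k=0}^{q_1}\Phi_k\otimes\tilde H_k$ and checking that each coefficient pattern $\Phi_k$ --- including the contributions of the corrections $Z_k=2\sum_{t=k}^{q_1}\tilde H_t$ to the first and last block columns --- is precisely the $\mathcal{AR}^{1D}_{n_1}$ matrix of the coordinate PSF $[\delta_{|i|,k}]_{i=-q_1}^{q_1}$; this is what makes the seemingly entangled low-rank corrections separate, and it is consistent with the tensor form $T_n=T_{n_1}\otimes T_{n_2}$ recorded in Theorem~\ref{teo:teo_Jordan}. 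Your identification of the diagonal entries of $\Delta_k$ is right (total mass $1$ or $2$ on the two affine columns of $T_{n_1}$, $2\cos(k\theta_{j_1})$ on the inner $\tau$ modes), the corner blocks $E_1=E_{n_1}=\tilde H_0+Z_1$ come out as the $\mathcal{AR}^{1D}_{n_2}$ matrix of the row-summed PSF $h^{\{r\}}$ by linearity of the PSF-to-matrix map, and the affine-eigenvector argument (anti-reflection reproduces affine samples exactly, and a strongly symmetric kernel has zero first moment, so the eigenvalue is the total mass) correctly yields the eigenvalue $1$ with multiplicity $4$ and the eigenvalues of $\tau_{n_1-2}(h^{\{c\}})$ and $\tau_{n_2-2}(h^{\{r\}})$ each doubled; the dimension count closes the argument. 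The only caveat worth recording explicitly is that the whole computation uses the strong symmetry $h_{-i_1,i_2}=h_{i_1,i_2}=h_{i_1,-i_2}$, which is implicit in the hypothesis $A_n\in\mathcal{AR}_n^{2D}$ but should be stated where you invoke $\tilde H_{-i_1}=\tilde H_{i_1}$ and the vanishing first moment.
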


It's worthwhile noticing that the three sets of multiple
eigenvalues are  related to the type of low rank correction
imposed by the BCs through the centers of
the anti-reflections. More precisely, the eigenvalues of $%
\tau_{n_2-2}(h^{\{r\}})$ and of $\tau_{n_1-2}(h^{\{c\}})$ take
into account the condensed PSF information considered along the
horizontal and vertical borders respectively, while the eigenvalue
equal to $1$ takes into account the condensed information of the
whole PSF at the four corners. In addition,  the spectral
characterization can be
completely described again in terms of the generating function associated to the $%
PSF$ defined in (\ref{eq:funzione_generatrice}), simply by extending to $0$
the standard $\tau$ evaluation grid, i.e., it holds
\begin{equation*}
\lambda_{s}(A_n)= f\left(x_{s_1}^{[n_1]},x_{s_2}^{[n_2]}\right),
s=(s_1,s_2), s_j=0,\ldots,n_j, \quad x_r^{[m]}=\frac{r \pi}{m+1},
\end{equation*}
where the $0-$index refers to the first/last columns of the matrix
$T_m$ \cite{ADS-AR2}. See \cite{ADS-AR1,ADNS-AR3} for some
algorithms related to standard operations like matrix-vector
products, resolution of linear systems and eigenvalues evaluations
with a computational cost of $O(n_1n_2\log (n_1 n_2)) $ ops. In
fact, the computational cost of the inverse transform is
comparable with the direct transform one and the very true penalty
seems to be the loss of orthogonality due to the first/last column
of the matrix $T_m$.

We stress that the latter complete spectral characterization and
the related fast algorithms for computing the eigenvalues are
essential for the fast implementation of the regularization
algorithms used in the numerical section.


\section{Optimal preconditioning}
\label{sez:Optimal}

In this section we consider in more detail the matrices arising
when Anti-Reflective BCs are applied in the case of a
non-symmetric PSF, the aim being to define the corresponding
optimal preconditioner in the $\mathcal{AR}_n^{2D}$ algebra.
\newline More precisely, let $A=A(h)$ be the Anti-Reflective matrix
generated by the generic PSF
$h_{PSF}=\left[h_{i_1,i_2}\right]_{i_1=-q_1,\ldots,q_1,i_2=-q_2,\ldots,q_2}$
and let $P=P(s)\in \mathcal{AR}_n^{2D}$ be the Anti-Reflective
matrix generated by the symmetrized PSF $s_{PSF}=\left[s_{i_1,i_2}\right]%
_{i_1=-q_1,\ldots,q_1,i_2=-q_2,\ldots,q_2}$. We are looking for the optimal
preconditioner $P^*=P^*(s^*)$ in the sense that
\begin{equation}  \label{eq:Pstar_optimal}
P^*=\underset{P\in \mathcal{AR}_n^{2D}}{\arg }\min \left\Vert A-P\right\Vert
_{\mathcal{F}}^{2}\text{ , \ }\bar{s}=\underset{s}{\arg }\min \left\Vert
A(f)-P(s)\right\Vert _{\mathcal{F}}^{2}\text{ ,}
\end{equation}%
where $\left\Vert \cdot \right\Vert _{\mathcal{F}}$ is the Frobenius norm,
defined as $\left\Vert A\right\Vert _{\mathcal{F}}=\sqrt{\underset{i,j}{\sum
}\left\vert a_{i,j}\right\vert ^{2}}$.
Indeed, an analogous result is know in \cite{NCT-SISC-1999} with respect to
Reflective BCs: given a generic PSF $h_{PSF}=\left[h_{i_1,i_2}\right]
$, the optimal preconditioner in the DCT-III matrix algebra is generated by
the strongly symmetric PSF $s_{PSF}=\left[s_{i_1,i_2}\right]
$, given by
\begin{equation}  \label{eq:simmetrizzata2d}
s_{\pm i_1,\pm i_2}=\dfrac{%
h_{-i_1,-i_2}+h_{-i_1,i_2}+h_{i_1,-i_2}+h_{i_1,i_2}}{4}\text{.}
\end{equation}%
Our interest is clearly motivated by the computational facilities proper of $%
\mathcal{AR}_n^{2D}$ algebra, coupled with its better approximation
properties.
We preliminary consider the one-dimensional case in order to introduce the
key idea in the proof with a simpler notation. Moreover, the proof argument
of the two-dimensional case is also strongly connected to the
one-dimensional one.

\subsection{One-dimensional case}

Let us consider a generic PSF $h_{PSF}=\left[ h_{i}\right] _{i=-q,\ldots ,q}$%
. As introduced in Section \ref{sez:BC-AR}, the idea is to apply an
anti-reflection with respect to the border points $f_{1}$ and $f_{n}$. Thus,
we impose%
\begin{equation*}
f_{1-i}=2f_{1}-f_{1+i}\text{,}\quad f_{n+i}=2f_{n}-f_{n-i}\text{,}\quad
i=1,\ldots ,q\text{.}
\end{equation*}%
The resulting matrix shows a more involved structure with respect to the
Reflective BCs, i.e., it is Toeplitz + Hankel plus a structured low rank
correction matrix, as follows 
\begin{equation}
A=\left[
\begin{tabular}{c|ccccc|c}
$v_{0}$ & \multicolumn{5}{|c|}{$u^{T}$} & $0$ \\ \hline
$v_{1}$ &  &  &  &  &  &  \\
$\vdots $ &  &  &  &  &  &  \\
$v_{q}$ &  &  & $B$ &  &  & $w_{q}$ \\
&  &  &  &  &  & $\vdots $ \\
&  &  &  &  &  & $w_{1}$ \\ \hline
$0$ & \multicolumn{5}{|c|}{$-(Ju)^{T}$} & $w_{0}$%
\end{tabular}%
\right]   \label{eq:ARnosym1D}
\end{equation}%
with%
\begin{eqnarray*}
u^{T} &=&\left[ h_{-1}-h_{1},\ldots ,h_{-q}-h_{q},0,\ldots ,0\right] \text{,}%
\quad -(Ju)^{T}=\left[ 0,\ldots ,0,h_{q}-h_{-q},\ldots ,h_{1}-h_{-1}\right]
\text{,} \\
v_{k} &=&h_{k}+2\underset{j=k+1}{\overset{q}{\sum }}h_{j}\text{,}\quad
w_{k}=h_{-k}+2\underset{j=k+1}{\overset{q}{\sum }}h_{-j}\text{,} \\
B &=&T(\left[ h_{-q},\ldots ,h_{q}\right] )-H_{\mathrm{TL}}(\left[
h_{2},\ldots ,h_{q}\right] )-H_{\mathrm{BR}}(\left[ h_{-2},\ldots ,h_{-q}%
\right] )\text{,}
\end{eqnarray*}%
where $T(\left[ h_{-q},\ldots ,h_{q}\right] )$ is the Toeplitz matrix
associated to the PSF $h_{PSF}$, 
while $H_{\mathrm{TL}}(\left[ h_{2},\right.$ $\left. \ldots ,h_{q}\right] )$ and $H_{\mathrm{%
BR}}(\left[ h_{-2},\ldots ,h_{-q}\right] )$ are respectively the top-left
Hankel and the bottom-right Hankel matrices
\begin{equation*}
{
\begin{tabular}{cc}
$H_{\mathrm{TL}}=\left[
\begin{array}{ccccccc}
h_{2} & h_{3} & \cdots  & h_{q} & 0 & \cdots  & 0 \\
h_{3} &  & h_{q} & 0 &  &  & \vdots  \\
\vdots  & h_{q} & 0 &  &  &  &  \\
h_{q} & 0 &  &  &  &  &  \\
0 &  &  &  &  &  &  \\
\vdots  &  &  &  &  &  & \vdots  \\
0 & \cdots  &  &  &  & \ldots  & 0%
\end{array}%
\right] \text{,}$  & $H_{\mathrm{BR}}=\left[
\begin{array}{ccccccc}
0 & \cdots  &  &  &  & \cdots  & 0 \\
\vdots  &  &  &  &  &  & \vdots  \\
&  &  &  &  &  & 0 \\
&  &  &  &  & 0 & h_{-q} \\
&  &  &  & 0 & h_{-q} & \vdots  \\
\vdots  &  &  & 0 & h_{-q} &  & h_{-3} \\
0 & \cdots  & 0 & h_{-q} & \cdots  & h_{-3} & h_{-2}%
\end{array}%
\right] \text{.}$
\end{tabular}%
}
\end{equation*}%
On the other hand, the Anti-Reflective matrix $P\in \mathcal{AR}^{1D}$
generated by a strongly symmetric PSF $s_{PSF}=\left[ s_{q},\ldots
,s_{1},s_{0},s_{1},\ldots ,s_{q}\right] $, among which the minimizer $%
P^{\ast }$ in (\ref{eq:Pstar_optimal}) will be searched, is clearly given by
\begin{equation*}
P=\left[
\begin{tabular}{c|ccccc|c}
$r_{0}$ & \multicolumn{5}{|c|}{$0^{T}$} & $0$ \\ \hline
$r_{1}$ &  &  &  &  &  &  \\
$\vdots $ &  &  &  &  &  &  \\
$r_{q}$ &  &  & $\tau (s)$ &  &  & $r_{q}$ \\
&  &  &  &  &  & $\vdots $ \\
&  &  &  &  &  & $r_{1}$ \\ \hline
$0$ & \multicolumn{5}{|c|}{$0^{T}$} & $r_{0}$%
\end{tabular}%
\right]
\end{equation*}%
where $r_{k}=s_{k}+2\underset{j=k+1}{\overset{q}{\sum }}s_{j}$ and $\tau (s)$
is the $\tau $ (or DST-I) matrix generated by the PSF $s_{PSF}$.

The optimality of the Anti-Reflective matrix generated by the symmetrized
PSF defined as
\begin{equation}
s_{\pm i}=\dfrac{h_{-i}+h_{i}}{2}\text{.}  \label{eq:simmetrizzata1d}
\end{equation}%
can be proved analogously as in \cite{NCT-SISC-1999} with respect to the
internal part $C_{I}=B-\tau (s)$ and by invoking a non-overlapping splitting
argument in order to deal with the external border $C_{B}$. 
In fact, we have
\begin{equation*}
\left\Vert C\right\Vert _{\mathcal{F}}^{2}=\left\Vert C_{I}\right\Vert _{%
\mathcal{F}}^{2}+\left\Vert C_{B}\right\Vert _{\mathcal{F}}^{2}
\end{equation*}%
and it easy to show that the minimizer found for the first term is the same
than for the second one.

Notice that $\Vert u^{T}\Vert ^{2}$ and $\Vert -(Ju)^{T}\Vert ^{2}$ are
constant terms in the minimization process. So, as naturally expected, the
obtained minimum value will be greater, the greater is the loss of symmetry
in the PSF. Moreover, with the choice (\ref{eq:simmetrizzata1d}), the first
and last column in $C_{B}$ share the same norm, i.e., again the most
favourable situation.
It is worth stressing that the minimization process of the second term $C_{B}
$ allows to highlight as the tuning of each minimization parameter can be
performed just by considering two proper corresponding entries in the
matrix, i.e.,
\begin{equation*}
(r_{p}-v_{p})+(r_{p}-w_{p})=0,\quad p=0,\ldots ,q
\end{equation*}%
where $v_{p}$ and $w_{p}$ are linear combination of the same coefficients
with positive and negative indices, respectively.
Taking this fact in mind, we can now consider a more geometrical approach to
the proof, that allows to greatly simplify also the proof with respect to
the minimization of the internal part and can be applied to any type of BCs
based on the fact that the values of unknowns outside the FOV are fixed or
are defined as linear combinations of the unknowns inside the FOV.

%

\begin{theorem}
Let $A=A(h)$ be the Anti-Reflective matrix generated by the generic PSF $%
h_{PSF}=\left[h_{i}\right]_{i=-q,\ldots,q}$.  The optimal
preconditioner in the $\mathcal{AR}_n^{1D}$ algebra is the matrix
associated with the symmetrized PSF $s_{PSF}=\left[s_q, \ldots,
s_1, s_0, s_1, \ldots, s_q\right] $, with
\begin{equation}  \label{eq:simmetrizzata1d-bis}
s_{i}=\dfrac{h_{-i}+h_{i}}{2}\text{.}
\end{equation}
\end{theorem}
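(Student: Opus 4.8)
The plan is to exploit the \emph{linearity} of the construction and reduce everything to an orthogonality statement, following the geometric idea announced above. The map $h\mapsto A(h)$ described by \eqref{eq:ARnosym1D} is linear in the PSF entries (each entry of $A(h)$ is a fixed linear combination of the $h_i$), and so is $s\mapsto P(s)$; hence $\mathcal P:=\{P(s)\ :\ s=(s_q,\dots,s_1,s_0,s_1,\dots,s_q)\}\subseteq\mathcal{AR}_n^{1D}$ is a linear subspace of $\R^{n\times n}$, and \eqref{eq:simmetrizzata1d-bis} is exactly the assertion that the Frobenius projection of $A(h)$ onto $\mathcal P$ is generated by the symmetrized PSF. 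First I would split $h=h^{(\mathrm s)}+h^{(\mathrm a)}$ with $h^{(\mathrm s)}_i=(h_i+h_{-i})/2$ (strongly symmetric) and $h^{(\mathrm a)}_i=(h_i-h_{-i})/2$ (anti-symmetric, so $h^{(\mathrm a)}_0=0$ and $h^{(\mathrm a)}_{-i}=-h^{(\mathrm a)}_i$). By linearity $A(h)=A(h^{(\mathrm s)})+A(h^{(\mathrm a)})$; inspecting \eqref{eq:ARnosym1D} for a strongly symmetric PSF gives $u=0$, $B=\tau(s)$ and $v_k=w_k=r_k$, so $A(h^{(\mathrm s)})=P(s)$ with $s_i=(h_i+h_{-i})/2\in\mathcal P$. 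Everything then reduces to the single claim that $A(h^{(\mathrm a)})$ is Frobenius-orthogonal to $\mathcal P$; granting it, for every $P\in\mathcal P$ the Pythagorean identity gives $\|A(h)-P\|_{\mathcal F}^2=\|A(h^{(\mathrm s)})-P\|_{\mathcal F}^2+\|A(h^{(\mathrm a)})\|_{\mathcal F}^2$ (since $A(h^{(\mathrm s)})-P\in\mathcal P$), which is minimized exactly (and only) by $P=A(h^{(\mathrm s)})=P(s)$, yielding \eqref{eq:simmetrizzata1d-bis}.

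To prove the orthogonality claim I would fix an anti-symmetric PSF $\tilde h$ and a symmetric PSF $s'$ and show $\langle A(\tilde h),P(s')\rangle_{\mathcal F}=0$, where $\langle X,Y\rangle_{\mathcal F}=\mathrm{tr}(X^TY)$, by splitting the index set $\{1,\dots,n\}^2$ into the internal block $\{2,\dots,n-1\}^2$, the first/last rows, and the first/last columns restricted to rows $2,\dots,n-1$. On the internal block the contribution is $\langle B,\tau(s')\rangle_{\mathcal F}$, with $B=T(\tilde h)-H_{\mathrm{TL}}([\tilde h_2,\dots,\tilde h_q])-H_{\mathrm{BR}}([\tilde h_{-2},\dots,\tilde h_{-q}])$ and $\tau(s')$ symmetric and centrosymmetric. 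Now $T(\tilde h)$ is skew-symmetric (because $\tilde h_{-k}=-\tilde h_k$ and $\tilde h_0=0$), hence $\langle T(\tilde h),\tau(s')\rangle_{\mathcal F}=0$; and since $\tilde h_{-k}=-\tilde h_k$ the two Hankel corrections combine into $H_{\mathrm{TL}}(c)-J\,H_{\mathrm{TL}}(c)\,J$ with $c=[\tilde h_2,\dots,\tilde h_q]$, which pairs to zero with any centrosymmetric matrix because $\langle JXJ,\tau(s')\rangle_{\mathcal F}=\langle X,J\tau(s')J\rangle_{\mathcal F}=\langle X,\tau(s')\rangle_{\mathcal F}$. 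Thus $\langle B,\tau(s')\rangle_{\mathcal F}=0$.

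For the border: on the first and last rows $P(s')$ carries only the corner values $r_0(s')$, paired with $v_0$ and $w_0=-v_0$ of $A(\tilde h)$, which gives $(v_0+w_0)r_0(s')=0$; and on the first/last columns (rows $2,\dots,n-1$) $P(s')$ restricts to $\hat r:=[r_1(s'),\dots,r_q(s'),0,\dots]^T$ and its flip $J\hat r$, while $A(\tilde h)$ restricts to $\hat v:=[v_1,\dots,v_q,0,\dots]^T$ and $J\hat w$ with $\hat w=-\hat v$ (since $w_k=-v_k$), so the pairing is $\langle\hat v,\hat r\rangle+\langle J\hat w,J\hat r\rangle=\langle\hat v+\hat w,\hat r\rangle=0$. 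Summing the three contributions yields $\langle A(\tilde h),P(s')\rangle_{\mathcal F}=0$ for every symmetric $s'$, which closes the argument. The one genuine difficulty — and precisely why the orthogonal-transform argument of Ng, R.~Chan and Tang cannot be reused — is that $T_n$ is non-orthogonal, so one cannot pass to diagonal coordinates and must instead work directly with the Frobenius inner product and the banded-plus-low-rank form \eqref{eq:ARnosym1D}; the saving feature is that the border/low-rank corrections of $P(s')$ are centrosymmetric and governed by the single corner value $r_0(s')$, which is exactly what makes the anti-symmetric part drop out. (Equivalently one may use the splitting $\|A(h)-P(s)\|_{\mathcal F}^2=\|C_I\|_{\mathcal F}^2+\|C_B\|_{\mathcal F}^2$ sketched above, minimizing the internal and border terms separately and checking that both are minimized by the same symmetrized PSF.)
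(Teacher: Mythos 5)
Your argument is correct, but it follows a genuinely different route from the paper's. The paper turns $\Vert A-P\Vert_{\mathcal F}^2$ into a weighted sum of squared planar distances $\mathbf{d}(\Theta_i,\hat\Theta_i)^2$, $\mathbf{d}(Z_j,\hat Z_j)^2$, $\mathbf{d}(\Omega_i,\hat\Omega_i)^2$, as in \eqref{eq:points}, pairing each entry indexed by $-i$ with the corresponding entry indexed by $+i$ as the $x$- and $y$-coordinates of a point; each term is then minimized separately by projecting onto the line $y=x$, and the linearity remark is what guarantees that the single choice $s_i=(h_{-i}+h_i)/2$ simultaneously attains the unconstrained minimum of every term (including those, like the $Z_j$, that mix several parameters), hence of the sum. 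You instead argue globally: split $h=h^{(\mathrm s)}+h^{(\mathrm a)}$, observe $A(h^{(\mathrm s)})=P(h^{(\mathrm s)})$ lies in the admissible subspace, and prove $A(h^{(\mathrm a)})\perp_{\mathcal F}\mathcal P$ by direct computation (skew-symmetric Toeplitz part against symmetric $\tau(s')$; the $J(\cdot)J$ conjugation identity against the centrosymmetric Hankel corrections; cancellation $w_k=-v_k$ on the borders), after which Pythagoras identifies the minimizer and gives uniqueness for free. Your approach buys a cleaner least-squares/projection argument that sidesteps the simultaneous-minimization subtlety and yields uniqueness explicitly; the paper's entrywise geometric decomposition is more elementary and is the version the authors then reuse verbatim in the two-dimensional case (with four coordinates and the line $x=y=z=w$) and advertise as portable to other boundary conditions. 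One cosmetic slip: with $\tilde h_{-k}=-\tilde h_k$ the two Hankel corrections combine into $-\bigl(H_{\mathrm{TL}}(c)-JH_{\mathrm{TL}}(c)J\bigr)$ rather than $H_{\mathrm{TL}}(c)-JH_{\mathrm{TL}}(c)J$, but the sign is immaterial since the pairing with $\tau(s')$ vanishes either way.
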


\begin{proof}
\begin{figure}[tb]
\centering
\includegraphics[scale=0.3]{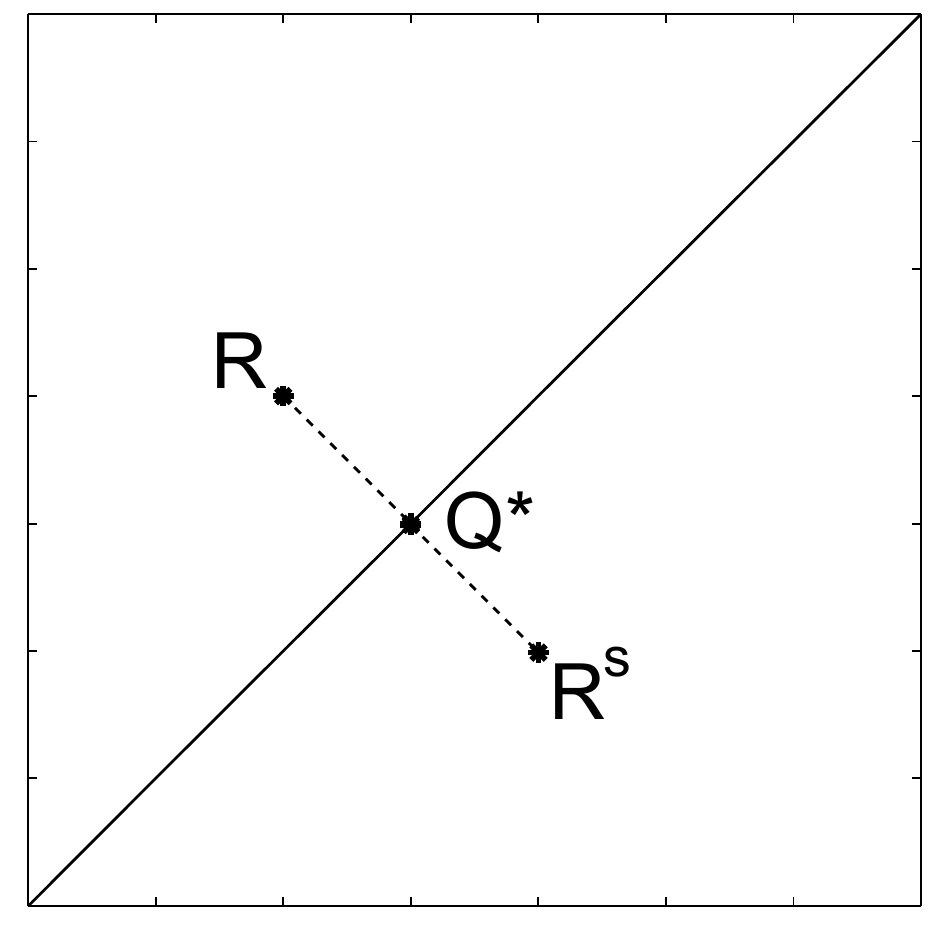}
\caption{A point $R$, its swapped point $R^S$, the optimal approximation of both $Q^*$.}
\label{fig:retta_proiezione}
\end{figure}
Preliminarily, as shown in Figure \ref{fig:retta_proiezione}, we
simply observe that if we consider in the Cartesian plane a point
$R=(R_x,R_y)$,
its optimal approximation $Q^*$, among the points $Q=(Q_x,Q_y)$ such that $%
Q_x=Q_y$, is obtained as the intersection between the line $y=x$,
with the perpendicular line that pass through $R$, that is
\begin{equation*}
\left\{
\begin{array}{l}
y-R_y=-(x-R_x) \\
y=x%
\end{array}%
\right.
\end{equation*}%
hence $Q_x^*=Q_y^*=\left(R_x+R_y\right) /2$. The same holds true if we
consider the swapped point $R^S=(R_y,R_x)$, since they share the same
distance, i.e., $d(R,Q^*)=d(R^S,Q^*)$. %

Clearly, due to linearity of obtained expression, this result can be extended also in the case of any linear combination of coordinates.
Thus, by explicitly exploiting the structure of $A$ and $P$, we define as
x-coordinate of a point the entry with negative index and as y-coordinate of
the same point the corresponding entry with positive index. For the sake of
simplicity we report an example for $q=3$, in which we put in evidence the x
or y coordinate definition,
\begin{eqnarray*}
C &=& A - P = \left[
\begin{array}{ccccccc}
\omega_{0}^y & \nu _{1}^x & \nu _{2}^x & \nu _{3}^x &  &  &  \\
\omega _{1}^y & \zeta _{0}^y & \zeta _{2}^x & \theta _{2}^x & \theta_{3}^x &
&  \\
\omega_{2}^y & \zeta_{1}^y & \theta_{0} & \theta_{1}^x & \theta_{2}^x &
\theta_{3}^x &  \\
\omega_{3}^y & \theta_{2}^y & \theta_{1}^y & \theta_{0} & \theta_{1}^x &
\theta_{2}^x & \omega_{3}^x \\
& \theta_{3}^y & \theta_{2}^y & \theta _{1}^y & \theta_{0} & \zeta_{1}^x &
\omega_{2}^x \\
&  & \theta _{3}^y & \theta_{2}^y & \zeta_{2}^y & \zeta_{0}^x & \omega _{1}^x
\\
&  &  & \nu_{3}^y & \nu _{2}^y & \nu_{1}^y & \omega_{0}^x%
\end{array}%
\right] -\left[
\begin{array}{ccccccc}
\hat{\omega}_{0}^y & 0 & 0 & 0 &  &  &  \\
\hat{\omega}_{1}^y & \hat{\zeta}_{0}^y & \hat{\zeta}_{2}^x & \hat{\theta}%
_{2}^x & \hat{\theta}_{3}^x &  &  \\
\hat{\omega}_{2}^y & \hat{\zeta}_{1}^y & \hat{\theta}_{0} & \hat{\theta}%
_{1}^x & \hat{\theta}_{2}^x & \hat{\theta}_{3}^x &  \\
\hat{\omega}_{3}^y & \hat{\theta}_{2}^y & \hat{\theta}_{1}^y & \hat{\theta}%
_{0} & \hat{\theta}_{1}^x & \hat{\theta}_{2}^x & \hat{\omega}_{3}^x \\
& \hat{\theta}_{3}^y & \hat{\theta}_{2}^y & \hat{\theta}_{1}^y & \hat{\theta}%
_{0} & \hat{\zeta}_{1}^x & \hat{\omega}_{2}^x \\
&  & \hat{\theta}_{3}^y & \hat{\theta}_{2}^y & \hat{\zeta}_{2}^y & \hat{\zeta%
}_{0}^x & \hat{\omega}_{1}^x \\
&  &  & 0 & 0 & 0 & \hat{\omega}_{0}^x%
\end{array}%
\right] \text{,}
\end{eqnarray*}%
Here, we set the points
\begin{eqnarray*}
\Theta_i &=& (\theta_i^x,\theta_i^y)=(h_{-i},h_{i}) \\
\Omega_i &=& (\omega_i^x,\omega_i^y)=(h_{-i}+2
\sum_{j=i+1}^{q}h_{-j},h_{i}+2 \sum_{j=i+1}^{q}h_{j})=(\theta_{i}^x+2
\sum_{j=i+1}^{q}\theta_j^x,\theta_{i}^y+2 \sum_{j=i+1}^{q}\theta_j^y) \\
N_i &=&(\nu_i^x,\nu_i^y)=(h_{-i}-h_{i},h_{i}-h_{-i})
=(\theta_i^x-\theta_i^{Sx},\theta_i^y-\theta_i^{Sy})
\end{eqnarray*}
and
\begin{eqnarray*}
Z_0 &=&
(\zeta_0^x,\zeta_0^y)=(h_{0}-h_{-2},h_{0}-h_{2})=(\theta_{0}^x-\theta_{2}^x,%
\theta_{0}^y-\theta_{2}^y) \\
Z_1&=&
(\zeta_1^x,\zeta_1^y)=(h_{-1}-h_{-3},h_{1}-h_{3})=(\theta_{1}^x-%
\theta_{3}^x,\theta_{1}^y-\theta_{3}^y) \\
Z_2 &=&
(\zeta_2^x,\zeta_2^y)=(h_{-1}-h_{3},h_{1}-h_{-3})=(\theta_{1}^x-%
\theta_{3}^{Sx},\theta_{1}^y-\theta_{3}^{Sy})
\end{eqnarray*}
related to the Hankel corrections. The points $\hat \Theta_i$, $\hat \Omega_i
$, $\hat Z_i$ related to the matrix $P$ are defined in a similar manner,
taking into account the strong symmetry property, i.e. they have the same x
and y coordinates. %
More in general, the key idea is to transform the original minimization
problem in the equivalent problem of minimizing the quantity
\begin{equation}
\begin{array}{l}
c_{0}\mathbf{d}(\Theta _{0},\hat{\Theta}_{0})^{2}+\ldots +c_{q}\mathbf{d}%
(\Theta_{q},\hat{\Theta}_{q})^{2}+\mathbf{d}(Z_{0},\hat{Z}_{0})^{2}+\ldots +%
\mathbf{d}(Z_{m},\hat{Z}_{m})^{2} \quad \quad \\
\quad +\mathbf{d}(\Omega _{0},\hat{\Omega}_{0})^{2}+\ldots +\mathbf{d}%
(\Omega _{q},\hat{\Omega}_{q})^{2}+\mathbf{d}(N_{1},0)^{2}+\ldots +\mathbf{d}%
(N_{q},0)^{2},%
\end{array}
\label{eq:points}
\end{equation}%
where $c_{j}$ are some constants taking into account the number of constant
Toeplitz entries. Now, by referring to the initial geometrical observation,
we start from points pertaining to the Toeplitz part, that can be minimized
separately, and we obtain the minimizer (\ref{eq:simmetrizzata1d-bis}). It
is also an easy check to prove the same claim with respect to any other
terms, by invoking the quoted linearity argument. %
\end{proof}


\subsection{Two-dimensional case}

Let  $h_{PSF}=\left[h_{i_1,i_2}\right]%
_{i_1=-q_1,\ldots,q_1,i_2=-q_2,\ldots,q_2}$ be a generic PSF. As introduced in Section \ref%
{sez:BC-AR}, the idea is to apply an anti-reflection with respect to the
border points $f_{1,i_2}$, $f_{i_1,1}$ and $f_{n_1,i_2}$, $f_{i_1,n_2}$, $%
i_1=1,\ldots,n_1$, $i_2=1,\ldots, n_2$, and a double
anti-reflection at the corners in order to preserve the tensorial
structure. The resulting matrix shows a more involved structure,
i.e., it is block Toeplitz + Hankel with Toeplitz + Hankel blocks
plus a structured low rank
correction matrix, as follows %
\begin{equation}
A=\left[
\begin{tabular}{c|ccccc|c}
$V_{0}$ & \multicolumn{5}{|c|}{$U$} & $0$ \\ \hline
$V_{1}$ &  &  &  &  &  &  \\
$\vdots $ &  &  &  &  &  &  \\
$V_{q_1}$ &  &  & $B$ &  &  & $W_{q_1}$ \\
&  &  &  &  &  & $\vdots $ \\
&  &  &  &  &  & $W_{1}$ \\ \hline
$0$ & \multicolumn{5}{|c|}{$-JU$} & $W_{0}$%
\end{tabular}%
\right] \text{,}  \label{eq:2d_form}
\end{equation}%
with%
\begin{eqnarray*}
U &=&\left[ \hat H_{-1}-\hat H_{1},\ldots ,\hat H_{-q_1}-\hat
H_{q_1},0,\ldots ,0\right] \text{,} \ -JU =\left[ 0,\ldots ,0,\hat
H_{q_1}-\hat H_{-q_1},\ldots ,\hat H_{1}-\hat H_{-1}\right] \text{,} \\
V_{j} &=&\hat H_{j}+2\underset{i=j+1}{\overset{q_1}{\sum }}\hat H_{i}\text{,}
\quad W_{j} = \hat H_{-j}+2\underset{i=j+1}{\overset{q_1}{\sum }}\hat H_{-i}%
\text{,} \\
B &=&T(\hat H_{-q_1},\ldots ,\hat H_{q_1})-H_{\mathrm{TL}}(\hat H_{2},\ldots
,\hat H_{q_1})-H_{\mathrm{BR}}(\hat H_{-2},\ldots ,\hat H_{-q_1})\text{.}
\end{eqnarray*}%
where $T$ indicates the block Toeplitz matrix, while $H_{\mathrm{TL}}$ and $%
H_{\mathrm{BR}}$ are respectively the top-left block Hankel matrix and the
bottom-right block Hankel matrix as just previously depicted in the unilevel
setting %
and where the block $\hat H_{j}$ is defined, according to (\ref{eq:ARnosym1D}%
), as
\begin{equation}
\hat H_{j}=\left[
\begin{tabular}{c|ccccc|c}
$v_{j,0}$ & \multicolumn{5}{|c|}{$u_j^T$} & $0$ \\ \hline
$v_{j,1}$ &  &  &  &  &  &  \\
$\vdots $ &  &  &  &  &  &  \\
$v_{j,q_2}$ &  &  & $B_j$ &  &  & $w_{j,q_2}$ \\
&  &  &  &  &  & $\vdots $ \\
&  &  &  &  &  & $w_{j,1}$ \\ \hline
$0$ & \multicolumn{5}{|c|}{$-(Ju_j)^T$} & $w_{j,0}$%
\end{tabular}%
\right] \text{,}  \label{eq:2d_form_block}
\end{equation}%
with $B_j=T(h_{j,-q_2},\ldots, h_{j,q_2})-H_{\mathrm{TL}}(h_{j,2},\ldots
,h_{j,q_2}) -H_{\mathrm{BR}}(h_{j,-2},\ldots ,h_{j,-q_2})$.

Refer to (\ref{eq:AR2D}) and (\ref{eq:AR2Dbis}) for the structure of the
matrix $P$ related to a strongly symmetric PSF in which the minimizer $P^*$,
see (\ref{eq:Pstar_optimal}), will be searched.

\begin{theorem}
Let $A=A(h)$ be the Anti-Reflective matrix generated by the generic PSF $%
h_{PSF}=\left[h_{i_1,i_2}\right]_{i_1=-q_1,\ldots,q_1,
i_2=-q_2,\ldots,q_2}$. \newline The optimal preconditioner in the
$\mathcal{AR}_n^{2D}$ algebra is the
matrix associated with the symmetrized PSF $s_{PSF}=\left[s_{i_1,i_2}\right]%
_{i_1=-q_1,\ldots,q_1, i_2=-q_2,\ldots,q_2}$, with
\begin{equation}  \label{eq:simmetrizzata2d-bis}
s_{\pm i_1,\pm i_2}=\dfrac{%
h_{-i_1,-i_2}+h_{-i_1,i_2}+h_{i_1,-i_2}+h_{i_1,i_2}}{4}\text{.}
\end{equation}%
%
%
\end{theorem}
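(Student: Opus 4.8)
The plan is to reduce the two-dimensional problem to the one-dimensional one already proven, exploiting the tensorial structure of $\mathcal{AR}_n^{2D}$ and, more concretely, the explicit block forms \eqref{eq:2d_form} and \eqref{eq:2d_form_block}. First I would write $C=A-P$ as a block matrix whose blocks fall into the same structural categories as in \eqref{eq:2d_form}: the Toeplitz-type blocks $\hat H_j$, the external border blocks $V_j$, $W_j$, the antisymmetric border blocks $\hat H_{-j}-\hat H_j$, and the corresponding $\hat H$'s of $P$ (which, by strong symmetry, coincide with their own flips). Since the Frobenius norm squared is additive over a partition of the entries into disjoint position sets, I would split $\|C\|_{\mathcal F}^2$ into a sum of three kinds of contributions exactly mirroring the $1D$ proof: the purely internal block-Toeplitz/block-Hankel part, the external border part coming from the $V_j,W_j$ and the $U,-JU$ strips, and — this is the genuinely new feature — the part coming from the low-rank corrections $Z_k$ inside each $\hat H_j$ and between blocks. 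The key observation is that \emph{every} entry of $C$ is a fixed linear combination of the PSF coefficients $h_{i_1,i_2}$ and of the corresponding $s_{i_1,i_2}$, with the combinations for the $P$-side respecting the fourfold symmetry $s_{\pm i_1,\pm i_2}$ being equal.

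Next I would apply the geometric lemma from the $1D$ proof in its full generality: given a point $R=(R_x,R_y)\in\mathbb R^2$, its best approximation among points with equal coordinates is the midpoint $(R_x+R_y)/2$, and by the linearity remark this persists for any linear functional of the coordinates and is invariant under swapping $R_x\leftrightarrow R_y$. In the $2D$ setting each "point" now has four natural coordinates, one for each sign pattern $(\pm i_1,\pm i_2)$, and the constraint imposed by membership in $\mathcal{AR}_n^{2D}$ with a strongly symmetric generator forces all four to be equal; the best approximation in the Frobenius sense is then the average of the four, which is precisely \eqref{eq:simmetrizzata2d-bis}. Concretely, I would introduce, in analogy with the $1D$ proof, points $\Theta_{i_1,i_2}$ with coordinates $(h_{-i_1,-i_2},h_{-i_1,i_2},h_{i_1,-i_2},h_{i_1,i_2})$ attached to the block-Toeplitz core, points $\Omega$, $V$-type, $W$-type points attached to the border strips built as linear combinations $\hat H_j+2\sum_{i>j}\hat H_i$, antisymmetric points $N$ attached to the $U$ and $-JU$ strips whose target is $0$, and the various $Z$-type points attached to the Hankel corrections — including the doubly-corrected corner terms — each again being a fixed linear combination of the $\Theta$ coordinates with the relevant swap symmetry. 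Then $\|C\|_{\mathcal F}^2$ equals a weighted sum $\sum c_\bullet\,\mathbf d(\cdot,\hat\cdot)^2$ over these points, plus the constant $\|U\|_{\mathcal F}^2+\|-JU\|_{\mathcal F}^2$ which does not depend on $s$.

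Finally I would minimize: the block-Toeplitz core contributes the terms $c_{i_1,i_2}\,\mathbf d(\Theta_{i_1,i_2},\hat\Theta_{i_1,i_2})^2$, whose minimizer over the four-equal-coordinate constraint is the average \eqref{eq:simmetrizzata2d-bis}; then, by the linearity argument, every other group of terms — the border strips $V,W,\Omega$, the Hankel corrections $Z$, and the corner corrections — is a linear image of the $\Theta$ coordinates and hence is \emph{simultaneously} minimized by the very same choice of $s$, exactly as in the $1D$ case where $(r_p-v_p)+(r_p-w_p)=0$ held coordinate-wise. The antisymmetric strips $N$ have target $0$ regardless of $s$, so they are irrelevant to the optimization (they only inflate the minimum value, increasingly so the less symmetric $h$ is). Therefore the same $s$ minimizes each group, hence the whole sum, and $P^*=P(s^*)$ with $s^*$ given by \eqref{eq:simmetrizzata2d-bis}.

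I expect the main obstacle to be purely bookkeeping rather than conceptual: one must verify carefully that the two nested low-rank corrections — the $z_{i_1,k}$ inside each $\hat H_j$ (vertical borders) and the $Z_k$ between blocks (horizontal borders), together with their interaction at the corners where a \emph{double} anti-reflection is applied — indeed produce only linear combinations of the $\Theta$-coordinates with the correct swap symmetries, so that the geometric lemma applies verbatim to each of them. Once the analogues of the points $\Omega_i$, $N_i$, $Z_0,Z_1,Z_2$ are correctly written at the block level (and at the corner level with the factor-$4$ and factor-$(-2)$ weights of the $2D$ anti-reflection), the minimization collapses to the $1D$ argument applied entrywise, and no new estimate is needed.
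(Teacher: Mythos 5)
Your proposal is correct and follows essentially the same route as the paper: the four-coordinate geometric lemma (best approximation on the diagonal of $\mathbb{R}^4$ is the average of the coordinates), the decomposition of $\left\Vert A-P\right\Vert_{\mathcal F}^{2}$ into a weighted sum of squared distances between points indexed by the sign patterns $(\pm i_1,\pm i_2)$, and the linearity argument showing that the Toeplitz-core minimizer simultaneously minimizes the Hankel, border and low-rank correction terms. The only difference is that you spell out the bookkeeping for the nested corrections in somewhat more detail than the paper, which leaves it as an ``easy check.''
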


\begin{proof}
The proof can be done by extending the geometrical approach just considered
in the one-dimensional case: we simply observe that if we consider in the $4$%
-dimensional space a point $R=(R_x,R_y,R_z,R_w)$, its optimal approximation $%
Q^*$ among the points $Q=(Q_x,Q_y,Q_z,Q_w)$ belonging to the line $\mathcal{L%
}$
\begin{equation*}
\left\{
\begin{array}{c}
x=t \\
y=t \\
z=t \\
w=t%
\end{array}%
\right.
\end{equation*}%
%
%
is obtained by minimizing the distance
\begin{eqnarray*}
\mathbf{d}^2(\mathcal{L},R)
&=&4t^{2}-2t(R_x+R_y+R_z+R_w)+R_x^{2}+R_y^{2}+R_z^{2}+R_w^{2}\text{.}
\end{eqnarray*}%
This is a trinomial of the form $\alpha t^{2}+\beta t+\gamma$, with $\alpha>0
$ and we find the minimum by using the formula for computing the abscissa of
the vertex of a parabola%
\begin{equation*}
{t}^*=-\frac{\beta}{2\alpha}=\frac{R_x+R_y+R_z+R_w}{4}\text{.}
\end{equation*}%
Hence the point $Q^*$ is defined as $Q^*_x=Q^*_y=Q^*_z=Q^*_w=t^*$. The same
holds true if we consider any swapped point $R^S$, not unique but depending
on the permutation at hand, since they share the same distance, i.e., $%
d(R,Q^*)=d(R^S,Q^*)$. Again, thanks to the linearity of obtained expression,
this result can be extended also in the case of any linear combination of coordinates.

Thus, by explicitly exploiting the structure of the matrices $A$
and $P$, we define a point by referring to the entry with positive
and negative two-index. For instance, points pertaining to the
Toeplitz part are defined as
\begin{eqnarray*}
\Theta_{i_1,i_2} &=&
(\theta_{i_1,i_2}^x,\theta_{i_1,i_2}^y,\theta_{i_1,i_2}^z,%
\theta_{i_1,i_2}^w)=(h_{-i_1,-i_2},h_{-i_1,i_2},h_{i_1,-i_2},h_{i_1,i_2}), \\
\hat \Theta_{i_1,i_2} &=& (\hat \theta_{i_1,i_2}^x,\hat
\theta_{i_1,i_2}^y,\hat \theta_{i_1,i_2}^z,\hat
\theta_{i_1,i_2}^w)=(s_{-i_1,-i_2},s_{-i_1,i_2},s_{i_1,-i_2},s_{i_1,i_2}),
\end{eqnarray*}%
respectively.

As in the unilevel setting, the original minimization problem is transformed
in the equivalent problem of minimizing the sum of squared distances
analogously as in (\ref{eq:points}). We start again from points pertaining
to the Toeplitz part, that can be minimized separately, and we obtain the
minimizer (\ref{eq:simmetrizzata2d-bis}). It is also an easy check to prove
the same claim with respect to any other couple of points pertaining to
Hankel or low rank corrections, by invoking the quoted linearity argument.
\end{proof}

It is worth stressing that this proof idea is very powerful in its
generality. It can be applied to any type of BCs based on the fact that the
values of unknowns outside the FOV are defined as linear combinations of the
unknowns inside the FOV, so that it may be useful in the future to prove
theoretical results for new proposed BCs.

\section{Computational results}
\label{sez:Computational}

A well-known iterative method for solving the image deblurring problem is Landweber method \cite{Landweber}, whose $(k+1)$-th iteration step is defined by
\begin{equation}
\label{LandweberMeth}
x_{k+1}=x_{k}+\tau A^{H}(g-Ax_{k}),
\end{equation}
where $A$ is the blurring matrix, $g$ is the recorded image and
$\tau$ is the descent parameter (we set it equal to one).  As one
can observe experimentally, the restorations seem to converge in
the initial iterations, before they become worse and finally
diverge; this phenomenon is called semiconvergence. Hence
Landweber method is a regularization method, where the number of
steps $k$ is the regularization parameter. Moreover it has good
stability properties, but it is usually very slow to converge to
the sought solution. Therefore it is a good candidate for testing
the proposed preconditioning technique. Thus we introduce the
preconditioned Landweber method
\begin{equation}
\label{DLandweberMeth}
x_{k+1}=x_{k}+\tau DA^{H}(g-Ax_{k}),
\end{equation}
where $D$ is the preconditioner.  In order to build it, we compute
the eigenvalues $\lambda _{j}$ of the blurring matrix associated
to the PSF and to periodic BCs (via FFT) or to the symmetrized PSF
and to Reflective BCs (via FCT) or to the symmetrized PSF and to
Anti-Reflective BCs (via FST, see Theorem \ref{teo:teo_Jordan} and
Theorem \ref{teo:teo_autovalori} and comments below), then we
apply the Tikhonov Filter
\begin{equation}
\begin{array}{cc}
d_{j}=\dfrac{1}{\left\vert \lambda _{j}\right\vert ^{2}+\alpha }
\end{array}
\label{TikFilter}
\end{equation}
to determine the eigenvalues $d_{j}$ of $D$; finally the PSF
related to $D$ can be obtained via IFFT or IFCT or IFST (the
inverses of the previous transforms, namely inverse FFT, inverse
FCT, inverse FST)). In numerical experiments we have set the
parameter $\alpha$ manually, so that we have reached excellent
performances both in terms of quality of the restorations and in
acceleration of the method.

Actually in our implementation, which is partially based on the
Matlab Toolbox \textsf{RestoreTools} \cite{NagyTools}, we have
never worked with $A^{H}$, but always with $A^{\prime }$, that is
the matrix related to the PSF rotated by 180 degrees. This
approach is known in literature as reblurring strategy
\cite{DS-IP-2005}. The reason behind this choice resides in one of
the main problems of Anti-Reflective algebra $\mathcal{AR}$, i.e.
the fact that it is not closed under transposition. We stress that
$A^{H}$ and $A^{\prime }$ are the same thing in case of periodic
and zero boundary conditions, but they are different for
Reflective and Anti-Reflective ones.

To test these different BCs and preconditioning techniques,  we
have taken into account the Cameraman deblurring problem of Figure
\ref{Fig:Cameraman}, in which the PSF is a slightly non-symmetric
portion of a Gaussian blur, and the Bridge deblurring problem of
Figure \ref{Fig:Bridge}, in which the PSF is an highly
non-symmetric portion of a Gaussian blur. In both cases we have
generated the blurred and noisy data $g$, adding about $0.1\%$ of
white Gaussian noise. We have chosen to add a low level of noise
to emphasize the importance of boundary conditions, which play a
leading role when the noise is low, while they become less
decisive when it grows up. Since we know the true image $f$, to
measure the quality of the deblurred images we compute the
Relative Restoration Error (RRE) $\left\Vert x-f\right\Vert _{\cal
F}/\left\Vert f\right\Vert_{\cal F}$, where $\left\Vert \cdot
\right\Vert _{\cal F}$ is the Frobenius norm and $x$ is the
computed restoration.

As we expected, from Table \ref{Tab:quasi_simm} and Table
\ref{Tab:non_simm}, we can notice that both Reflective and
Anti-Reflective boundary conditions outperform periodic ones,
which give rise to poor restorations (see first image of Figure
\ref{Fig:Cameraman_Landweber} and Figure
\ref{Fig:Bridge_Landweber}). Furthermore by means of
Anti-Reflective BCs (see third image of Figure
\ref{Fig:Cameraman_Landweber} and Figure
\ref{Fig:Bridge_Landweber}) we can gain restorations of better
quality compared with ones obtained employing Reflective BCs (see
second image of Figure \ref{Fig:Cameraman_Landweber} and Figure
\ref{Fig:Bridge_Landweber}). From Tables
\ref{Tab:quasi_simm}-\ref{Tab:non_simm} and Figures
\ref{Fig:Cameraman_D_Landweber}-\ref{Fig:Bridge_D_Landweber} we
can see that all these considerations hold also for $D$-Landweber
method --- i.e. Landweber method with preconditioning --- which
for a suitable choice of the parameter $\alpha$ is able to reach
restorations of the same quality of the classical Landweber method
in much smaller number of steps. In particular the reduction in
steps for both Reflective and Anti-Reflective BCs is around $50$
times for the Cameraman deblurring problem and around $8$ for the
Bridge deblurring problem.

We stress that the iteration count reported in Table
\ref{Tab:non_simm} in the Anti-Reflective row  does not have to
deceive, because, as it can be seen from Figure
\ref{Fig:Bridge_Grafici}, if we compare the restorations gained by
Landweber (preconditioned or not) at any given fixed iteration,
employing Reflective BCs or Anti-Reflective BCs, we see that the
latter shows always equal or better restoration quality. The same
remark holds for the Cameraman deblurring problem (see Table
\ref{Tab:quasi_simm}). In fact Figure \ref{Fig:Bridge_Grafici} is
very instructive because it tells to the generic user two things:
a) the curves for Reflective and Anti-Reflective BCs are very
flat, b) the approximation obtained when using Anti-Reflective BCs
is always better or equal to that obtained with Reflective BCs.
The combined message of the previous two items is that we can
safely choose the Anti-Reflective BCs, even when we are unable to
estimate precisely the stopping criterion for deciding the optimal
iteration: we notice that this observation does not hold for the
periodic BCs where a small error in the evaluation of the optimal
iteration leads to a substantial deterioration of the quality of
the resulting restored image.

In the end, from the results reported in this section we can say
that our proposal of the optimal preconditioner in the context of
Anti-Reflective BCs is as effective as the one introduced in
\cite{NCT-SISC-1999} for Reflective BCs. Therefore the present
work represents a theoretical and numerical continuation and
strengthening of that line of research.

\begin{figure}[thp]
\centering
\begin{tabular}{ccc}
\includegraphics[scale=0.5]{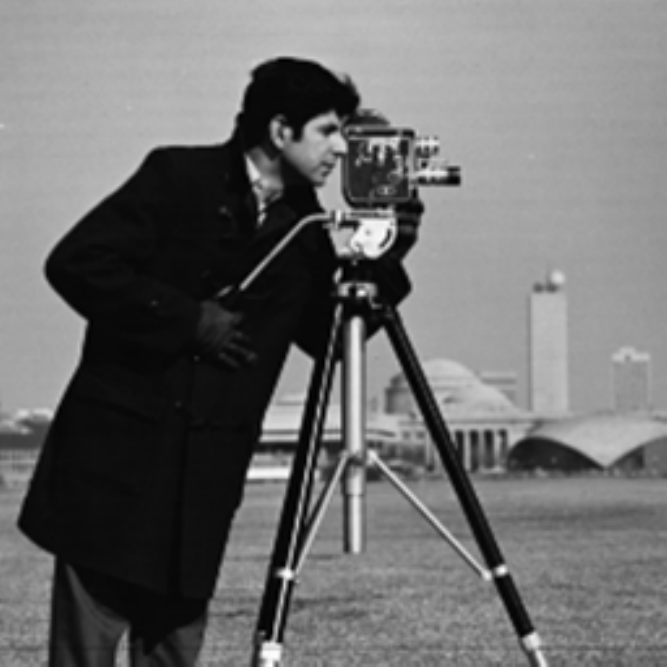} &
\includegraphics[scale=0.5]{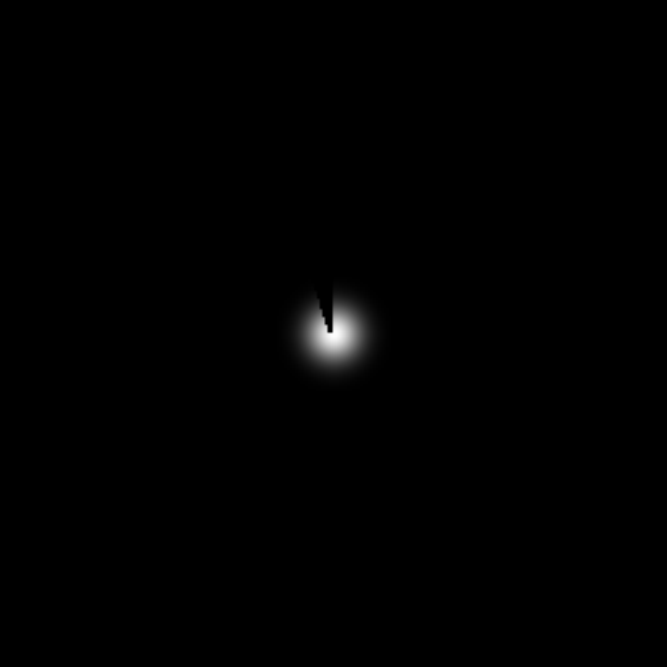} &
\includegraphics[scale=0.5]{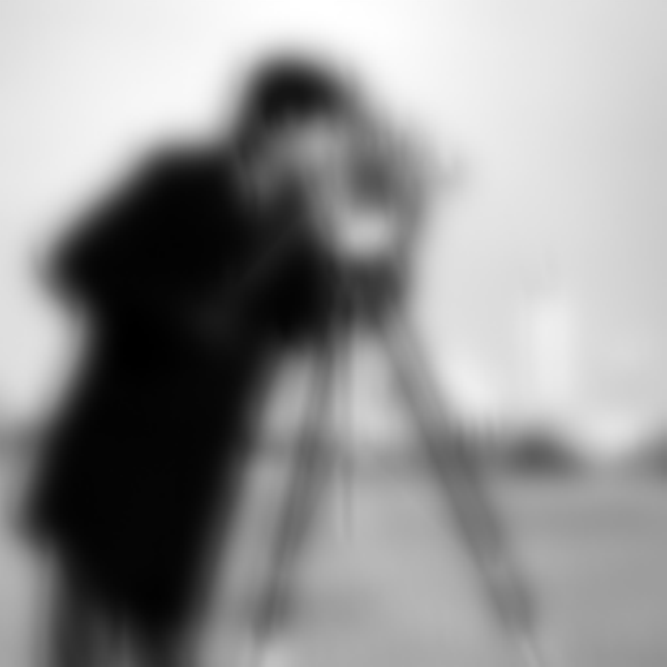}
\end{tabular}%
\caption{Cameraman deblurring problem: true image, PSF, blurred and noisy image.}
\label{Fig:Cameraman}
\end{figure}

\begin{table}[thp]
\centering
\begin{tabular}{c|c|c|c|c|}
\cline{2-5}
& \multicolumn{2}{|c}{Landweber} & \multicolumn{2}{|c|}{$D$-Landweber} \\ \cline{2-5}
& RRE & IT & RRE & IT \\ \hline
\multicolumn{1}{|r|}{Periodic} & 0.2147 & 46 & 0.2136 & 3
\\
\multicolumn{1}{|r|}{Reflective} & 0.1611 & 953 & 0.1611 & 19
\\
\multicolumn{1}{|r|}{Anti-Reflective} & 0.1582 & 1461 & 0.1582 & 25
\\
\hline
\end{tabular}
\caption{Results of the classical and preconditioned Landweber method related to the Cameraman deblurring problem, employing different BCs.}
\label{Tab:quasi_simm}
\end{table}

\begin{figure}[thp]
\centering
\begin{tabular}{ccc}
\includegraphics[scale=0.5]{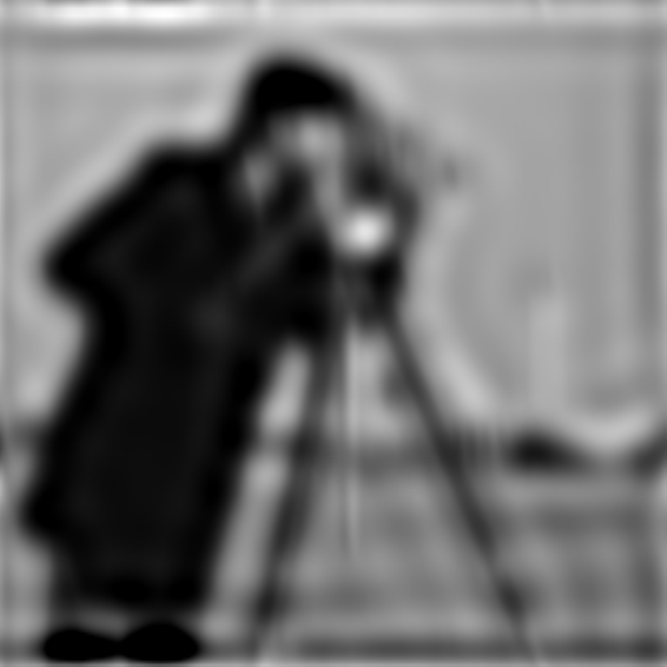} &
\includegraphics[scale=0.5]{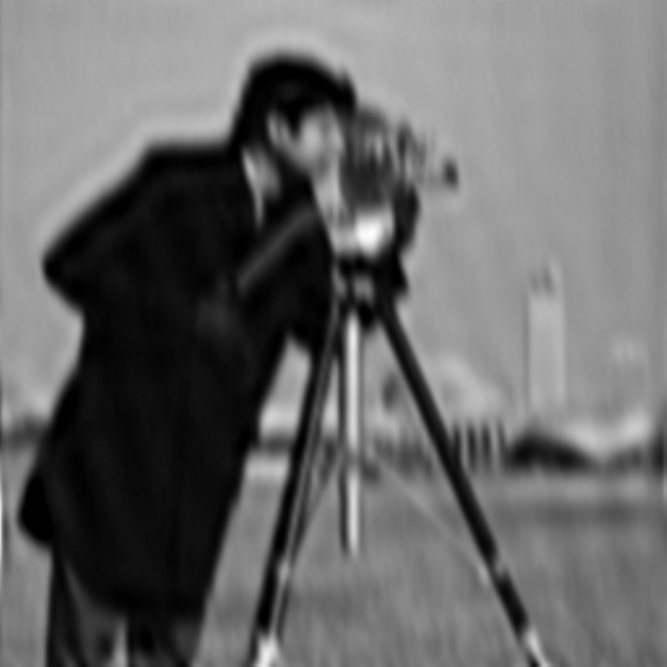} &
\includegraphics[scale=0.5]{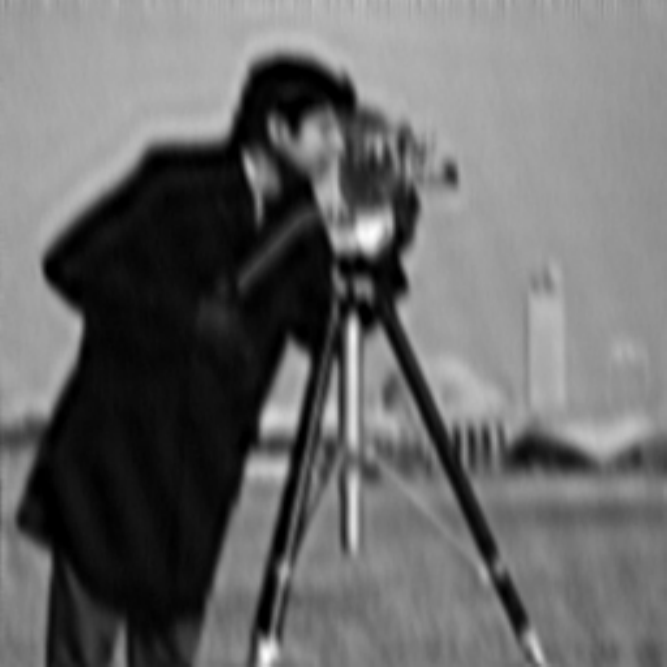}
\end{tabular}%
\caption{Landweber restorations, employing periodic, reflective, anti-reflective BCs.}
\label{Fig:Cameraman_Landweber}
\end{figure}

\begin{figure}[thp]
\centering
\begin{tabular}{ccc}
\includegraphics[scale=0.5]{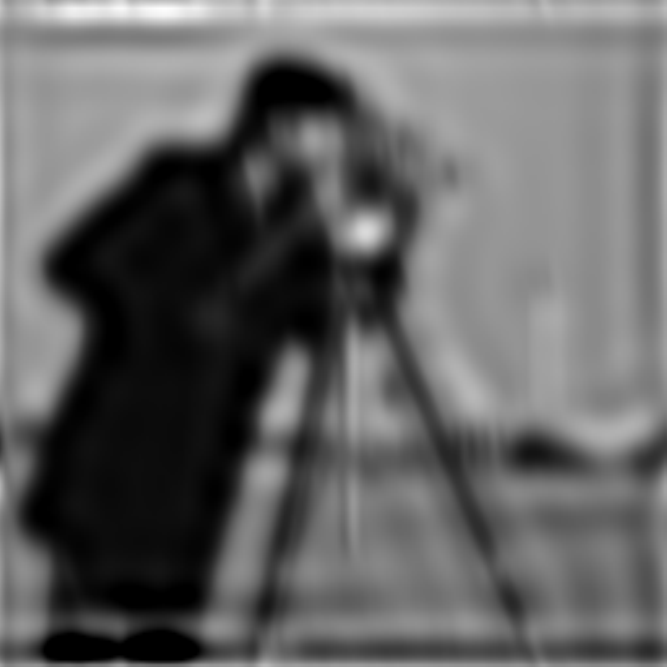} &
\includegraphics[scale=0.5]{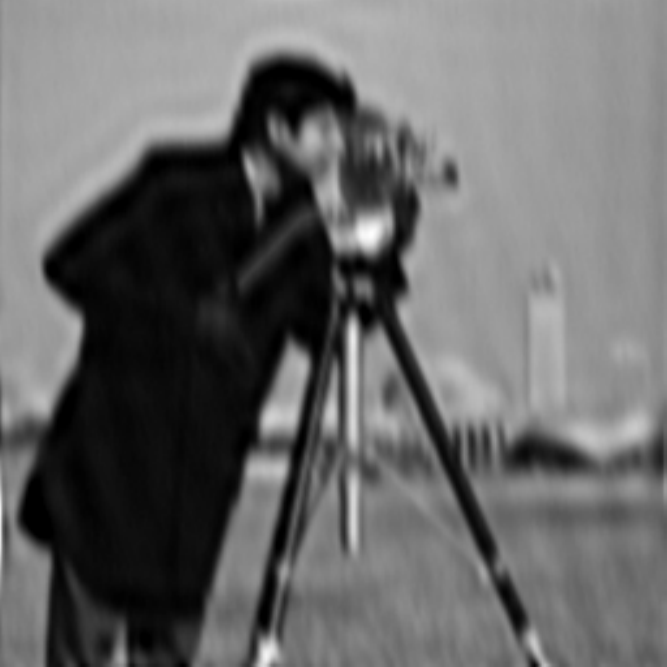} &
\includegraphics[scale=0.5]{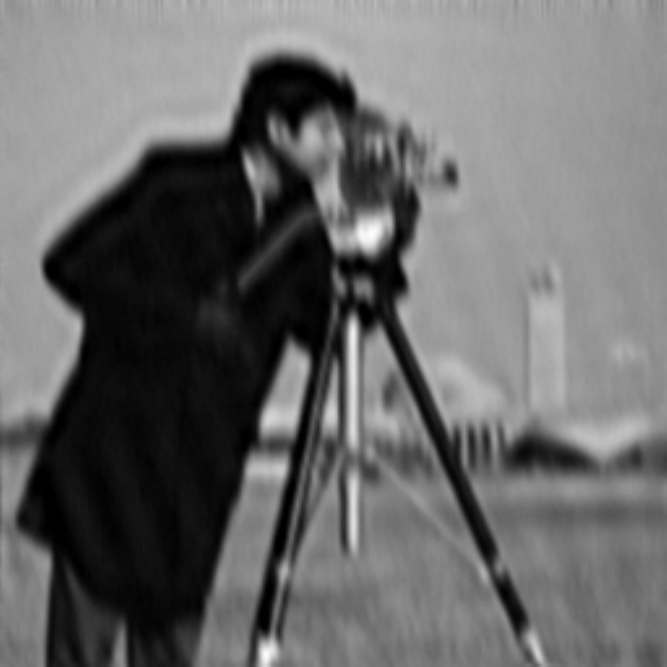}
\end{tabular}%
\caption{Preconditioned Landweber restorations, employing periodic, reflective, anti-reflective BCs.}
\label{Fig:Cameraman_D_Landweber}
\end{figure}

\begin{figure}[thp]
\centering
\begin{tabular}{ccc}
\includegraphics[scale=0.5]{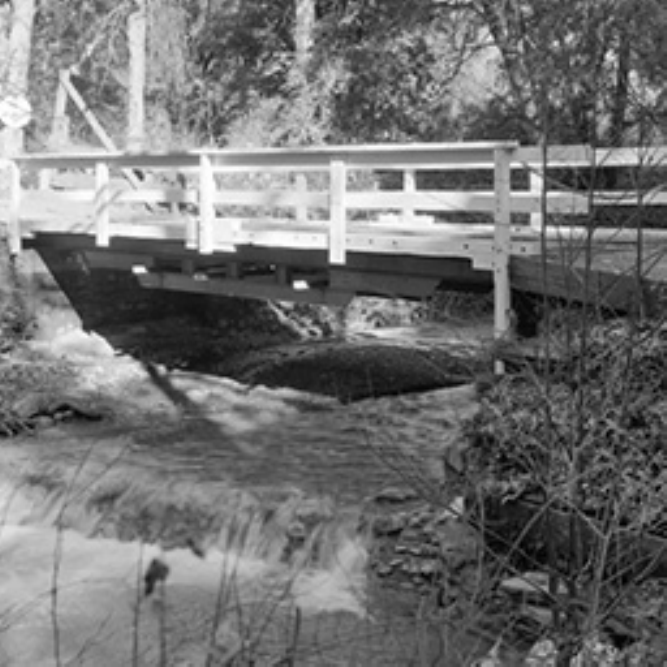} &
\includegraphics[scale=0.5]{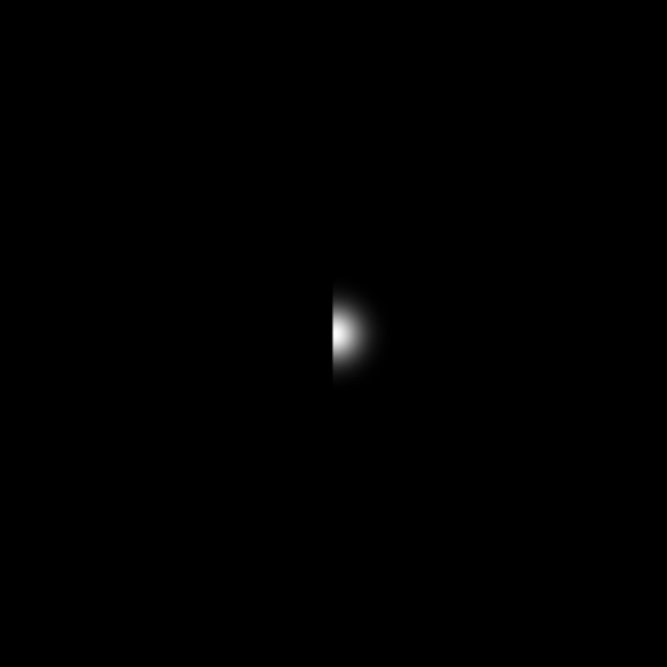} &
\includegraphics[scale=0.5]{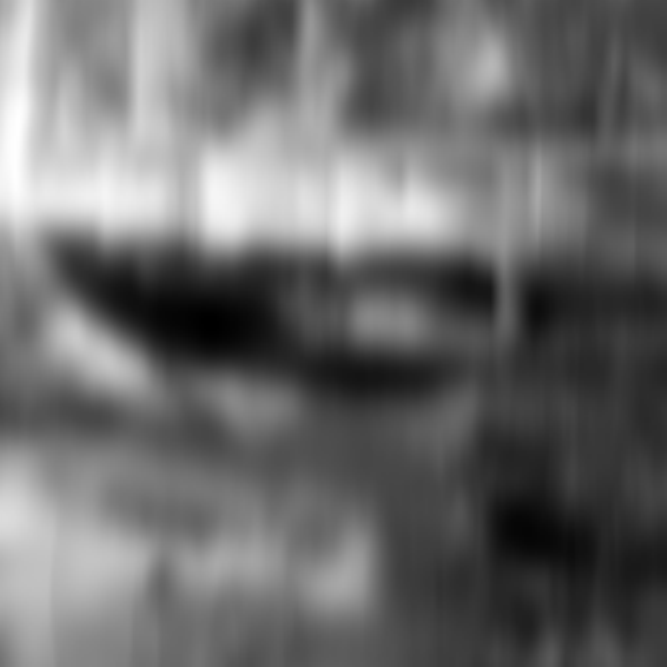}
\end{tabular}%
\caption{Bridge deblurring problem: true image, PSF, blurred and noisy image.}
\label{Fig:Bridge}
\end{figure}

\begin{table}[thp]
\centering
\begin{tabular}{c|c|c|c|c|}
\cline{2-5}
& \multicolumn{2}{|c}{Landweber} & \multicolumn{2}{|c|}{$D$-Landweber} \\ \cline{2-5}
& RRE & IT & RRE & IT \\ \hline
\multicolumn{1}{|r|}{Periodic} & 0.2573 & 9 & 0.2561 & 2
\\
\multicolumn{1}{|r|}{Reflective} & 0.2195 & 1281 & 0.2195 & 146
\\
\multicolumn{1}{|r|}{Anti-Reflective} & 0.2114 & 12824 & 0.2114 & 1718
\\
\hline
\end{tabular}
\caption{Results of the classical and preconditioned Landweber method related to the Bridge deblurring problem, employing different BCs.}
\label{Tab:non_simm}
\end{table}

\begin{figure}[thp]
\centering
\begin{tabular}{ccc}
\includegraphics[scale=0.5]{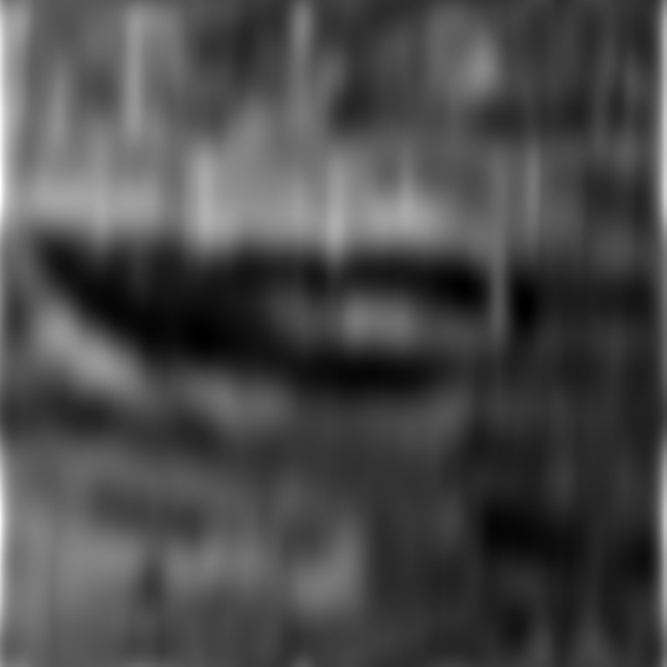} &
\includegraphics[scale=0.5]{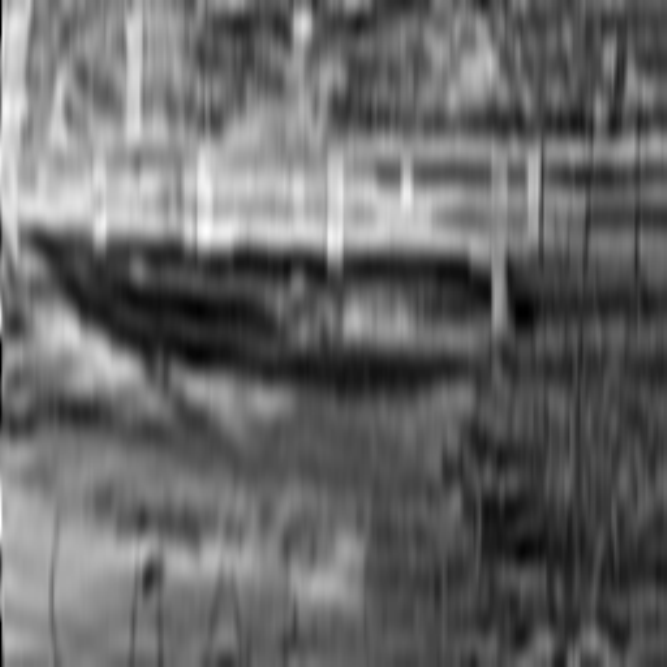} &
\includegraphics[scale=0.5]{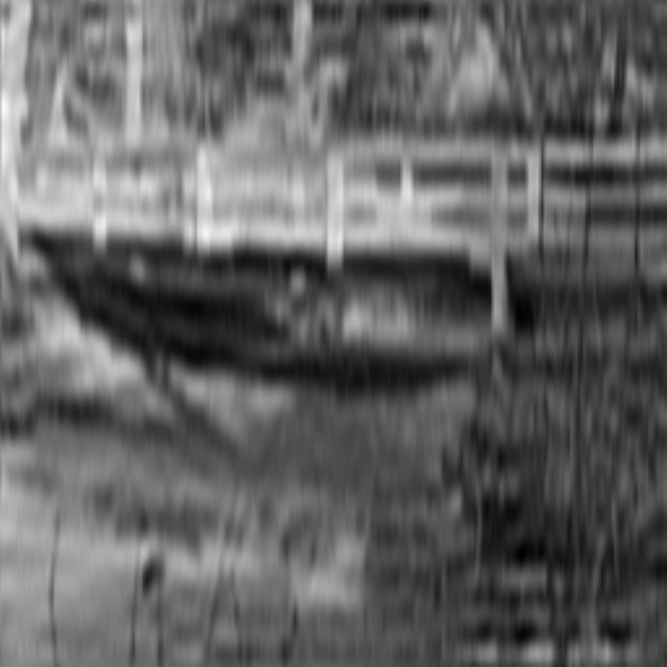}
\end{tabular}%
\caption{Landweber restorations, employing periodic, reflective, anti-reflective BCs.}
\label{Fig:Bridge_Landweber}
\end{figure}

\begin{figure}[thp]
\centering
\begin{tabular}{ccccc}
\includegraphics[scale=0.5]{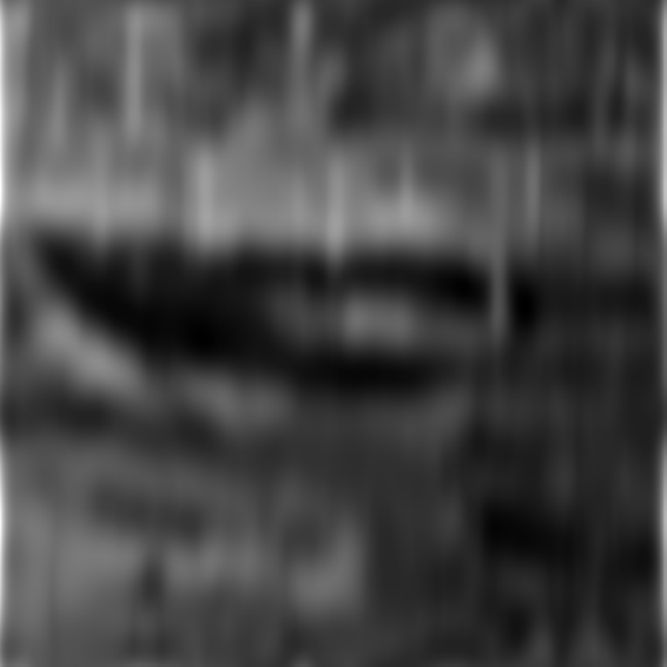} &
\includegraphics[scale=0.5]{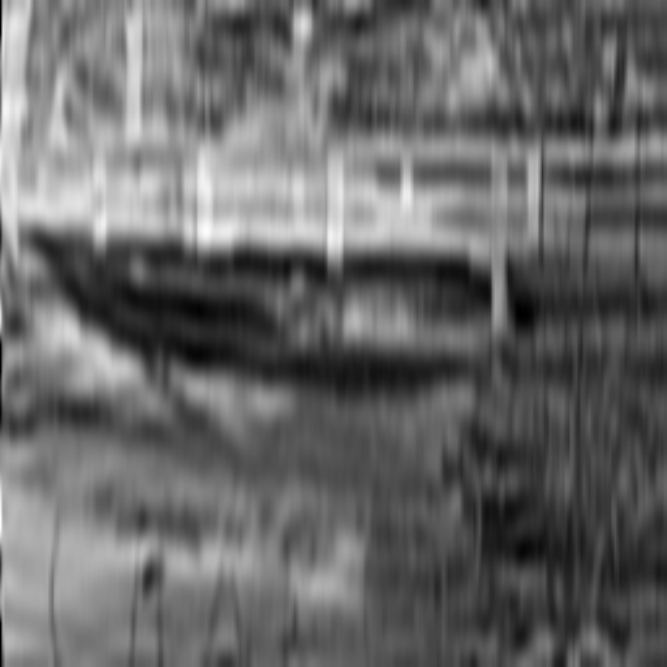} &
\includegraphics[scale=0.5]{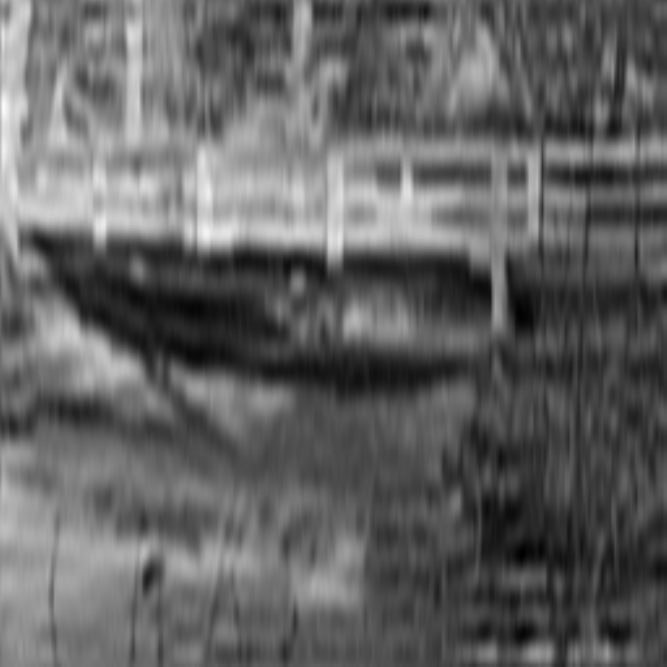}
\end{tabular}%
\caption{Preconditioned Landweber restorations, employing periodic, reflective, anti-reflective BCs.}
\label{Fig:Bridge_D_Landweber}
\end{figure}

\begin{figure}[htb]
\centering
\begin{tabular}{cc}
\includegraphics[scale=0.4]{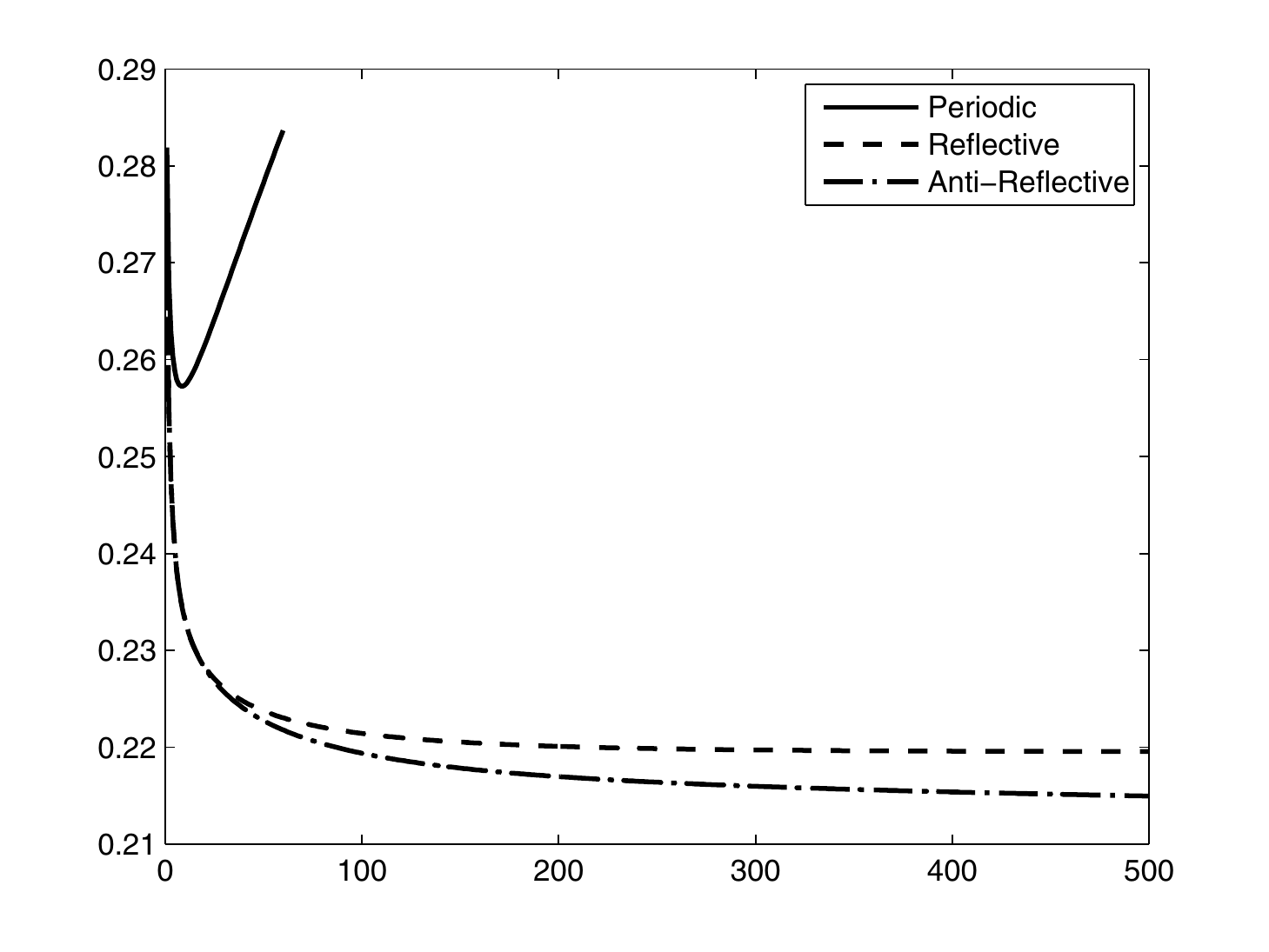}
\includegraphics[scale=0.4]{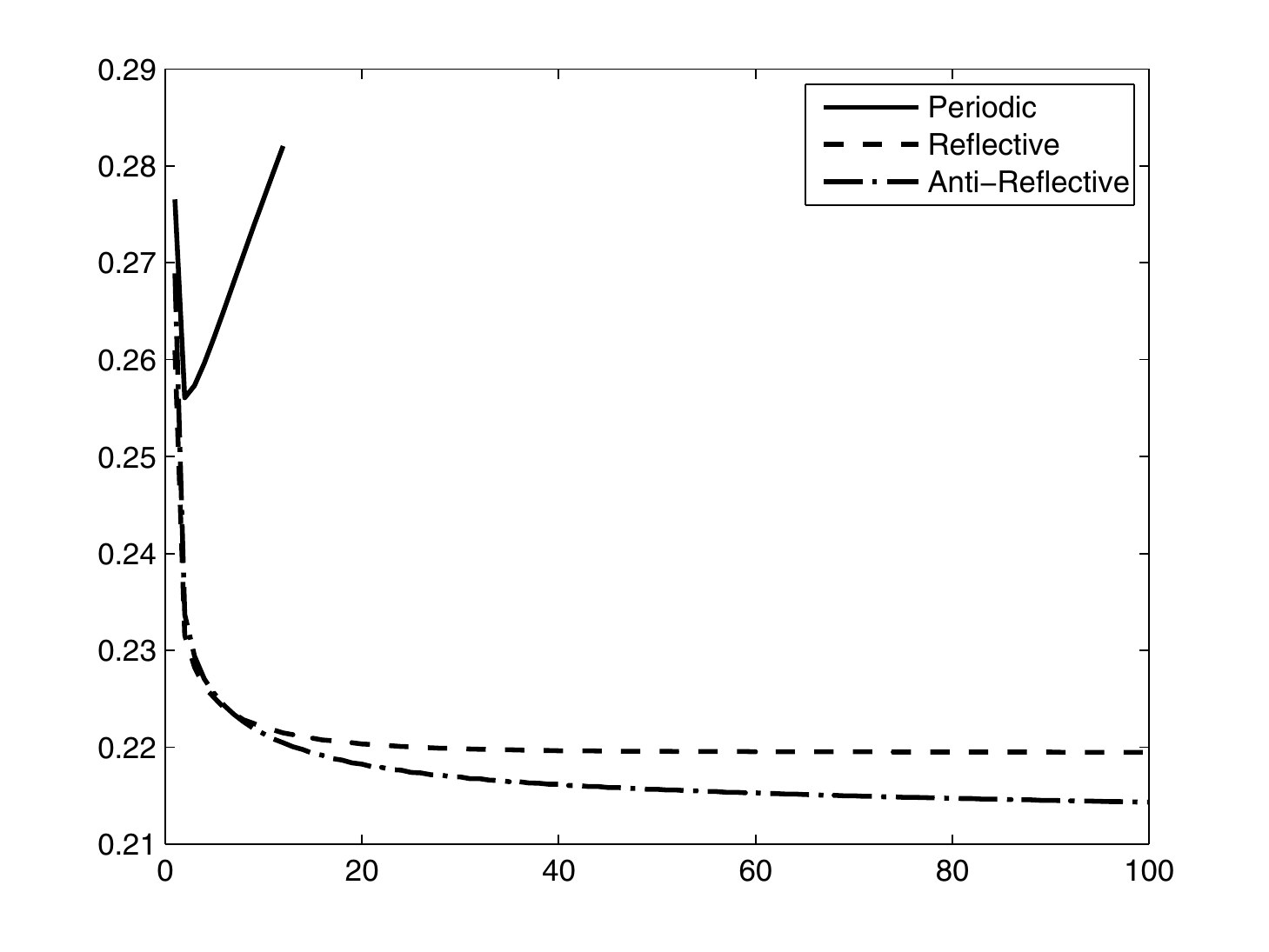}
\end{tabular}%
\caption{Bridge deblurring problem: RRE trends of Landweber (on the left) and $D$-Landweber (on the right) for different BCs.}
\label{Fig:Bridge_Grafici}
\end{figure}

\section{Conclusions and Perspectives}
\label{sez:Conclusions}

Inspired by the theoretical results on optimal preconditioning
stated in \cite{NCT-SISC-1999} in the Reflective BCs environment,
in this paper we have presented analogous results for
Anti-Reflective BCs. In both cases the optimal preconditioner is
the blurring matrix associated to the symmetrized PSF. We stress
that our proof is based on a geometrical idea, which allows to
greatly simplify the used arguments, even when non orthogonal
transforms are involved. Moreover that idea is very powerful in
its generality and it may be useful in the future to prove
theoretical results for new BCs.

Computational results have shown that the proposed preconditioning
strategy is effective and it is able  to give rise to a meaningful
acceleration both for slightly and highly non-symmetric PSFs. On
the other hand, symmetrization is efficient when we have a PSF
that is near to be symmetric and it becomes more and more
ineffective as the PSF departs from symmetry. In this case, other
techniques \cite{Z2}, which  can manage directly non-symmetric
structures, can gain better performances and in this direction we
see a substantial development in the near future.


\end{document}